\newtheoremstyle{hessu}                
{3pt}{3pt}{}{\parindent}{\bfseries}{.}{.5em}{}
\newtheorem{teoreema}{Theorem}[section]
\newtheorem{lemma}{Lemma}[section]
\newtheorem{Remark}{Remark}[section]
\newtheorem{definition}{Definition}[section]
\def\oN{\mathbb N}
\def\oQ{\mathbb Q}
\def\oR{\mathbb R}
\def\oC{\mathbb C}
\def\oZ{\mathbb Z}
\def\ma{\mathfrak{A}}
\def\mb{\mathfrak{B}}
\def\soc{second order characterizable}
\def\sock{$L^2_{\kappa\omega}$-characterizable}
\title{On Second-order Characterizability}
\author{T. Hyttinen, K. Kangas and J. V\"a\"an\"anen}
\thanks{Research of the first and third authors was partially supported by
grant 40734 of the Academy of Finland}
\thanks{Research of the second author was partially supported by
the Wihuri Foundation} 
\thanks{Research of the third author was partially supported by
 the European Science Foundation EUROCORES LogICCC programme LINT project.}
\begin{document}
\maketitle

\begin{abstract} We investigate the extent of \soc\ structures by extending Shelah's {\em Main Gap} dichotomy to second order logic. For this end we consider a countable complete first order theory $T$.  We show that all sufficiently large models of $T$ have a characterization up to isomorphism in the extension of second order logic obtained by adding a little bit of infinitary logic if and only if $T$ is shallow superstable  with NDOP and NOTOP. Our result relies on cardinal arithmetic assumptions. Under weaker assumptions we get  consistency results or alternatively results about second order logic with Henkin semantics.
\end{abstract}


\tableofcontents

\section{Introduction}
 
Let us call a structure $\ma$ {\em second order characterizable} if there is a second order sentence $\phi$ in the vocabulary of $\ma$ such that $$\mb\models\phi\iff \ma\cong\mb$$ for all structures $\mb$. If $\phi$ was required to be first order, very few structures would be characterizable in this way, in fact only the finite ones. But with a second order $\phi$ almost any familiar structure occurring in mathematics is characterizable. The most obvious examples are $(\oN,<), (\oZ,+,\otimes,0,1),$ $(\oQ,+,\times,0,1), (\oR,+,\times,0,1)$ and $(\oC,+,\times,0,1)$. What is the extent of \soc\ structures in mathematics? In this paper we attempt to answer this question.

There are obvious cardinality restrictions to the extent to which arbitrary structures can be \soc\ in that there are only countably many second order sentences to begin with. We overcome this restriction by focusing on the infinitary logic $L^2_{\kappa\omega}$, the extension of $L_{\kappa\omega}$ by second order quantification. Even so, the number of possible characterizable structures is limited, and characterizing every structure is out of the question. It seems reasonable to think that structures that are in some vague sense ``natural" are \soc\ but structures that are in some vague sense ``arbitrary", e.g. constructed with the help of the Axiom of Choice, can be proved not to be  \soc, and indeed our results support this claim.

The Axiom of Choice is often used in mathematics to construct objects which have pathological properties and which seem to elude explicit definition. Typical examples are the construction of a well-ordering of the reals and constructions of less obvious bases for vector spaces. It is well-known that if $V=L$, then there is a well-ordering $\prec$ of $\oR$ such that $(\oR,+,\times,0,1,\prec)$ is \soc, but if a Cohen real is added (or if certain large cardinals exist) the existence of such a well-order $\prec$ is impossible. In Section~\ref{Hamel} we show that the same is true of Hamel bases for the reals. 

Ajtai~\cite{MR548473} proved that if $V=L$, then the second order theory of a countable structure determines the structure up to isomorphism, and on the other hand, if a Cohen real is added, then there are countable structures that are not determined up to isomorphism by their second order theory. Keskinen~\cite{Kes} extended this to uncountable structures by replacing second order logic by $L_{\kappa\omega}^2$. Solovay~\cite{solovay} focused on finitely axiomatizable second order theories and showed with an argument similar to Ajtai's that if $V=L$, then every finitely axiomatized complete second order theory is categorical, but consistently there are finitely axiomatized complete second order theories that are non-categorical. We complement the results of Ajtai and Keskinen by investigating how general these results are. We take an arbitrary countable complete first order theory $T$ and an uncountable cardinal $\kappa$ and ask, does $T$ have  models of cardinality $\kappa$ that are not second order (or rather $L^1_{\kappa\omega}$-) characterizable? It turns out that the answer to this question follows the dividing line of Shelah's Main Gap Dichotomy: Such models always exist, subject to assumptions about cardinal arithmetic, if and only if $T$ is shallow superstable and without DOP or OTOP (Theorem~\ref{main}).

What do our results say about second order theories? Suppose $T$ is a complete second order (more exactly $L^2_{\kappa\omega}$) theory in a countable vocabulary. What can be said about the categoricity of $T$? Of course, we assume that $T$ is consistent, i.e. has a model $\ma$ of some cardinality $\kappa$. If $\kappa$ is big enough (as in the assumptions of Theorem~\ref{main}), we can make the following conclusions about categoricity  in cardinality $\kappa$: Let $T_0$ be the set of first order consequences of $T$. Of course, $T_0$ is a complete theory. If $T_0$ is shallow superstable and without DOP or OTOP, every model of $T_0$, and hence of $T$, of cardinality $\kappa$ is \soc\ in the sense of $L^2_{\kappa\omega}$. If $T$ is finitely axiomatizable, then moreover $T$ is categorical, as a finitely axiomatized complete second order theory can have only models of one cardinality. On the other hand, suppose $T_0$ is unsuperstable or superstable with DOP or OTOP. Then $T_0$ has two models $\ma$ and $\mb$ which are $L^2_{\kappa\omega}$-equivalent but non-isomorphic. Thus, if $T^*$ is the  $L^2_{\kappa\omega}$-theory of $\ma$, then $T^*$ is a complete $L^2_{\kappa\omega}$-theory which is non-categorical and which has the same first order consequences as our original $T$. So our model theoretic criterion does not decide the categoricity of $T$ but only the categoricity of a theory which has the same models as $T$ up to first order elementary equivalence.  

One way to describe our results is the following: Suppose we are interested in finding structures of cardinality $\kappa$ that are not $L^2_{\kappa\omega}$-characterizable. We may be interested in groups, fields, linear orders, Boolean algebras, graphs, equivalence relations, or combinations of those. All we need to do is to choose a structure of the appropriate kind the first order theory of which is unsuperstable or superstable with DOP or OTOP. Then we get from our general results a non-$L^2_{\kappa\omega}$-characterizable model of this theory.

On the other hand, suppose we want to use second order logic to analyze models of a shallow superstable first order theory without DOP or OTOP. From our general results we know that for $\kappa$ satisfying our particular assumptions every model of cardinality $\kappa$ of the theory is $L^2_{\kappa\omega}$-characterizable. So there is an $L^2_{\kappa\omega}$-sentence which acts as a perfect invariant for the model. By studying  the proof we see where second order logic is used and we can actually analyze the invariant further.

In Section~\ref{Henkin} we study second order characterizability in the more general framework of normal models, that is, general models  in the sense of Henkin~\cite{MR12:70b} satisfying the Comprehension axioms of Hilbert and Ackermann \cite{MR0351742}. The more general framework permits us to get results without cardinal arithmetic assumptions. The lesson then is, that non-categoricity of second order theories is a common phenomenon but if we want to manifest non-categoricity by means of {\em full} models (models in which the second order variables range over {\em all} subsets and relations of the domain), we have to make cardinal arithmetic assumptions or use forcing.

{\bf Notation:} $S^\kappa_\lambda$ is the set of ordinals $<\kappa$ of cofinality $\lambda$. $\mathcal{P}(A)$ denotes the power-set of the set $A$. $A\Delta B$ denotes the symmetric difference of $A$ and $B$, i.e. $(A\setminus B)\cup(B\setminus A)$. CH denotes the Continuum Hypothesis. AC denotes the Axiom of Choice. CA denotes the Comprehension Axioms of second (and higher) order logic introduced by Hilbert and Ackermann \cite{MR0351742}.

\section{Second order characterizability of Hamel bases}\label{Hamel}

The classical structures on which mathematics is largely based, such as  $(\oN,<), (\oZ,+,\otimes,0,1),$ $(\oQ,+,\times,0,1), (\oR,+,\times,0,1)$ and $(\oC,+,\times,0,1)$, are easily seen to be \soc. The question can be asked, what are the structures arising in  mathematics that are {\em not} \soc? In this section we give one example, the Hamel bases for the reals as a $\oQ$-vector space. 

It was proved in \cite{MR763890} that the Axiom of Choice is equivalent to the statement that every vector space has a basis. Thus the question of the existence of a (Hamel) basis for the reals as a $\oQ$-vector space is  related to AC. 
Another equivalent of AC is the Well-Ordering Principle: Every set can be well-ordered. If $V=L$, then there is a $\Sigma^1_2$-well-ordering of the reals, a result that goes back to G\"odel. It was proved in  \cite{MR0176925} that if a Cohen real is added by forcing, there is no well-ordering of the reals that would be definable in set theory, hence in that case there  is a fortiori also no well-ordering $\prec$ of the reals such that $(\oR,+,\times,0,1,\prec)$ would be \soc. It there are infinitely many Woodin cardinals, then all projective sets of reals are Lebesgue measurable (\cite{MR959110,MR959109}) and hence there can be no such well-ordering $\prec$.

\begin{teoreema}
If we add one Cohen real, then in the extension there is no $B\subseteq\oR$ such that the following hold:
\begin{enumerate}
\item $B$ is a Hamel basis for $\oR$, i.e. a basis for $\oR$ as a $\mathbb{Q}$-vector space.
\item  The structure $(\oR, +, \times, 0, 1, B)$ is second-order characterizable.
\end{enumerate}  On the other hand, if $V=L$, then such a $B$ does exist.
\end{teoreema}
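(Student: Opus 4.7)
I would handle the two directions separately.

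For the $V=L$ part, I would use G\"odel's $\Sigma^1_2$-well-ordering $<_L$ of $\oR$, available in $L$. Define $B$ greedily along $<_L$: a real $r$ belongs to $B$ exactly when it is $\oQ$-linearly independent from $\{s\in B:s<_L r\}$. Since $(\oR,+,\times,0,1)$ is itself \soc\ and second-order logic over the reals has sufficient expressive power to formalise ``$V=L$'' and to define $<_L$ (second-order arithmetic over the reals captures the relevant $\Sigma^1_2$-predicate), one can write a second-order sentence $\phi$ (with a distinguished predicate symbol for $B$) asserting: the underlying field is the real field, $V=L$ holds in the intended second-order sense, and $B$ is the canonical Hamel basis obtained from the greedy recursion along $<_L$. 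This $\phi$ characterizes $(\oR,+,\times,0,1,B)$ up to isomorphism.

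For the Cohen-real part, I would argue by contradiction using the homogeneity of Cohen forcing $\mathbb{P}$. Assume that in $V[c]$ there is a Hamel basis $B$ and a second-order sentence $\phi$ characterizing $(\oR,+,\times,0,1,B)$. Choose a $\mathbb{P}$-name $\dot B$ and a condition $p$ forcing this. Since $\phi$ is a finite object and therefore lies in $V$, for any automorphism $\pi$ of $\mathbb{P}$ with $\pi(p)=p$ the shifted name $\pi(\dot B)$ is also forced by $p$ to yield a Hamel basis characterized by $\phi$. Evaluating both names at the generic $G$ produces in $V[c]$ two Hamel bases $B$ and $B':=\pi(\dot B)^G$, each satisfying $\phi$, whence by the characterising property there is a field automorphism $\sigma$ of $\oR^{V[c]}$ with $\sigma(B)=B'$.

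The main obstacle is to squeeze a contradiction out of this, because field automorphisms of $\oR$ are plentiful in $V[c]$ (via AC) and so a bare counting or cardinality argument will not suffice. The route I would take is to exploit that any Hamel basis in $V[c]$ must represent the Cohen real $c$ as a $\oQ$-combination of finitely many of its elements, and to choose $\pi$ so that it displaces those elements in a way no field automorphism of $\oR^{V[c]}$ can repair, using the genericity of $c$ and absoluteness between $V$ and $V[c]$ of sufficiently low-level statements. Combined with the observation that second-order characterizability forces the $\mathrm{Aut}(\oR^{V[c]},+,\times)$-orbit of $B$ to be definable in a Cohen-symmetry-invariant way, this contradicts the existence of $\phi$, in the spirit of the Ajtai and Solovay arguments recalled in the introduction.
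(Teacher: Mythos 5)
Your $V=L$ direction is essentially the paper's: a second\mbox{-}order definable well-order of $\oR$ yields, by greedy recursion, a second-order definable Hamel basis, and since $(\oR,+,\times,0,1)$ is itself \soc, the expansion is characterizable. That part is fine. The Cohen direction, however, has a genuine gap, and it stems from a false premise: you write that ``field automorphisms of $\oR$ are plentiful in $V[c]$ (via AC).'' They are not. The field $(\oR,+,\times)$ is \emph{rigid}: any field automorphism preserves the set of squares, hence the order, and fixes $\oQ$, hence is the identity. (You are thinking of $\oC$, whose wild automorphisms do require AC.) This is not a side issue --- rigidity is the engine of the whole argument. It means that the isomorphism $\sigma$ you obtain between $(\oR,+,\times,0,1,B)$ and $(\oR,+,\times,0,1,B')$ is the identity, so $B'=B$; equivalently, characterizability of the expansion gives outright second-order \emph{definability} of $B$ over $(\oR,+,\times,0,1)$, via $x\in B\iff\exists B_0(\phi(B_0)\wedge B_0x)$. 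The ``Cohen-symmetry-invariant definability of the $\mathrm{Aut}$-orbit of $B$'' you invoke is exactly this, since the orbit is $\{B\}$, but you do not draw the consequence, and your final paragraph (``the route I would take is to exploit\dots'') is a statement of intent rather than an argument; as written, no contradiction is actually derived.

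For the record, here is how the paper closes the circle once definability is in hand (and it matches the ingredients you gesture at). One first adjusts $B$ to a definable basis $B'$ with $1\in B'$. Write the Cohen real as $r^G=q_0\cdot 1+q_1b_1+\dots+q_nb_n$ with $q_i\in\oQ$ and $b_i\in B'\setminus\{1\}$, and take $p\in G$ forcing this together with $\psi(\dot b_i)$ for the defining formula $\psi$ of $B'$. Flip one binary digit of $r^G$ beyond $\mathrm{dom}(p)$ via an automorphism of $\mathbb{P}$ fixing $p$; this yields a generic $H$ with $V[H]=V[G]$, the \emph{same} definable basis $B'$, and $r^H=r^G+q$ for a nonzero rational $q$. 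Since $p$ forces the same coefficients $\check q_i$ and forces the $\dot b_i$ into $B'\setminus\{1\}$ for both generics, comparing the two representations of $r^H$ over the single basis $B'$ forces the coefficient of the basis vector $1$ to satisfy $q_0=q_0+q$, i.e.\ $q=0$, a contradiction. Without the rigidity step your two bases need not coincide and the uniqueness-of-representation argument has nothing to bite on, so the proof as proposed does not go through.
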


\begin{proof}
We force with the partial order $\mathbb{P} = \{p: n \rightarrow \{0,1\} \textrm{  } | \textrm{  } n \in \omega \}$, ordered by inclusion. Let $G$ be a $\mathbb{P}$-generic filter over the ground model $V$.  We define the real $r^G$ so that its binary expansion is 
\begin{equation*}
r^G=0.{r_0}^G{r_1}^G{r_2}^G...
\end{equation*}
where
\begin{displaymath}
{r_i}^G=(\bigcup G)(i).
\end{displaymath}

Let $B \subseteq \mathbb{R}$ be a basis  for $\mathbb{R}$ as a vector space over $\mathbb{Q}$. We make a counterassumption that there is a second-order formula $\phi$ characterizing the structure $(\mathbb{R},+,\times,0,1,B)$. Now we can find a basis $B' \subseteq \mathbb{R}$ such that  $1 \in B'$, and a second-order formula $\psi$ for which
\begin{displaymath}
x \in B' \Leftrightarrow (\mathbb{R}, +, \times, 0, 1) \models \psi(x).
\end{displaymath}
This is done as follows. Let  $a_0, \ldots, a_n \in B$ be such that 
\begin{displaymath}
1=\lambda_0 a_0 + \ldots + \lambda_n a_n 
\end{displaymath}
for some $\lambda_0, \ldots, \lambda_n \in \mathbb{Q}$. Now choose $b_1, \ldots, b_n$ so that $\textrm{span}(1,b_1, \ldots, b_n)=\textrm{span}(a_0, a_1, \ldots, a_n)$. Let $\lambda_0^i, \ldots, \lambda_n^i \in \mathbb{Q}$ be such that
\begin{displaymath}
 b_i=\lambda_0^i a_0 + \ldots + \lambda_n^i a_n, 
\end{displaymath}  
for $1 \leq i \leq n$. Now we can write
\begin{eqnarray*}
\psi(x)  =  \exists B_0 (\phi(+, \times, 0, 1, B_0)  \wedge \exists a_0 \ldots  \exists a_n ( \bigwedge_{0\leq i \leq n}B_0 a_i   \wedge 1=\lambda_0 a_0 + \ldots + \lambda_n a_n \\ \wedge ((B_0 x \wedge \bigwedge_{0 \leq i \leq n} \neg x=a_i) \lor x=1 \lor \bigvee_{1 \leq i \leq n} x=\lambda_0^i a_0 + \ldots + \lambda_n^i a_n))).
\end{eqnarray*}
Since every rational number is expressible in our language, we may think of the numbers $\lambda_j, \lambda_j^i$, for $0 \leq j \leq n$ and $1 \leq i \leq n$, as parameters in the formula.  
  
There are $q_0, q_1, \ldots, q_n \in \mathbb{Q}$, and $b_1, \ldots, b_n \in B'$ such that  
\begin{eqnarray*}
r^G=q_0 1 + q_1 b_1 + \ldots + q_n b_n,
\end{eqnarray*}
$q_i \neq 0$ for $0<i\leq n$, $b_i \neq b_j$ for $j \neq i$, and $b_i \neq 1$ for $0<i \leq n$.
Thus, there is a condition $p \in G$ such that
\begin{displaymath}
\begin{array}{ll}
p \Vdash & r^G = \check{q_0} \check{1} + \check{q_1} \dot{b_1} + \ldots + \check{q_n} \dot{b_n} 
\ \wedge \\
& \bigwedge_{1 \leq i \leq n} \neg \check{q_i}=0 
 \wedge \bigwedge_{1 \leq i < j \leq n} \neg \dot{b_i}=\dot{b_j} 
\  \wedge\\
&   \bigwedge_{1 \leq i \leq n} \psi(\dot{b_i})\,  \wedge \, \psi(\check{1}) \,\wedge \bigwedge_{1 \leq i \leq n} \neg \dot{b_i}= \check{1} .
\end{array}
\end{displaymath}
Let $n=\textrm{dom } p$. Choose $m>n$ so that $\bigcup G(m)=0$. Then the function $f: \, \mathbb{P} \rightarrow \mathbb{P}$,
\begin{displaymath}
f(p)(i)=\left\{ \begin{array}{ll}
p(i) & \textrm{if $i \neq m$}\\ 1-p(i) & \textrm{if $i=m$}\\  
\end{array} \right.
\end{displaymath}
is an automorphism of the partial order $\mathbb{P}$, and $f \in V$. Let
\begin{displaymath}
H = \{f(q) \, | \, q \in G \}.
\end{displaymath}
Then, $H$ is $\mathbb{P}$-generic over $V$, and it is easy to see that $H \in V[G]$ and $G \in V[H]$. Thus, $V[H]=V[G]$. 

As $m \notin \textrm{dom }p$, we have $f(p)=p$, and  $p \in H$. Thus, we can write
\begin{displaymath}
r^G = {q_0} 1 +  q_1 {b_1} ^G + \ldots + q_n {b_n}^G
\end{displaymath}
and
\begin{displaymath}
r^H = {q_0} 1 +  q_1 {b_1} ^H + \ldots + q_n {b_n}^H, 
\end{displaymath}
where $b_i^G$, $b_i^H$ $\in B'$. Let
\begin{displaymath}
q=r^H-r^G.
\end{displaymath}
We have changed only one number in the binary expansion, so we get
\begin{displaymath}
q=0.0\ldots01 \in \mathbb{Q}.
\end{displaymath} 
Thus,
\begin{displaymath}
r^H=r^G+q=({q_0}+q)1 +  q_1 {b_1} ^G + \ldots + q_n {b_n}^G.
\end{displaymath}
As the representation of a vector with respect to the basis is unique, we have $q=0$. Thus $r^G=r^H$ which is a contradiction. 

The claim concerning $V=L$ follows from the existence, assuming $V=L$, of a second order definable well-order of the reals.

\end{proof}

Second order characterizability can be generalized in at least two ways: we can allow  third or even higher order quantifiers, or we can allow infinite conjunctions and disjunctions. Most of our results extend from second order to higher order. In Section~\ref{Henkin} we deal explicitly with higher order logic. The introduction of infinitary logic in connection with second order characterizability is particularly relevant because of the following result of Scott \cite{MR34:32}: Every countable structure in a countable vocabulary is characterizable in $L_{\omega_1\omega}$ up to isomorphism among countable structures. If we try to extend this to uncountable structures we find that several obvious routes are blocked. Uncountable $L_{\infty\omega}$-equivalent structures need not be isomorphic even if they are of the same cardinality. For example, $\aleph_1$-like dense linear orders without a first element are all $L_{\infty\omega_1}$-equivalent, but need not be isomorphic \cite{MR0462942}. We introduce now a useful amalgam of second order and infinitary logic. This concept was introduced in \cite[Chapter 5]{Kes}:
  
\begin{definition}
Let $\kappa$ be a regular cardinal. The logic $L^2_{\kappa \omega}$ is defined as follows:
\begin{itemize}
\item atomic sencond-order formulas are $L^2_{\kappa \omega}$-formulas,
\item if $\phi$ is an $L^2_{\kappa \omega}$-formula, then $\neg \phi$, $\exists x \phi$, $\forall x \phi$, $\exists X \phi$, and $\forall X \phi$ are $L^2_{\kappa \omega}$-formulas.
\item if $\phi_i$, $i<\lambda<\kappa$, are $L^2_{\kappa \omega}$-formulas and only finitely many individual or second order variables occur in $\{\phi_i:i<\lambda\}$, then $\bigwedge_{i<\lambda} \phi_i$ and $\bigvee_{i<\lambda} \phi_i$ are $L^2_{\kappa \omega}$-formulas.
\end{itemize}
\end{definition}

Since we will be dealing with concepts of characterization applied to several different logics, we define:

 \begin{definition}
 We say that a structure $\ma$ is \emph{characterizable in the logic $L$} if there exists an $L$-sentence $\phi$ such that $\ma\models\phi$ and any structure $\mb$, for which $\mb \models \phi$, is isomorphic with $\ma$.
 \end{definition}

We can now immediately observe that every model $\ma$ of size $\kappa$ is characterizable in $L_{\kappa^+ \omega}^{2}$:
Let $L$ be the vocabulary of $\ma$ and let $R$ be a new binary predicate. 
We expand $\ma$ to a an $L \cup \{R\}$-model $\ma^{*}$ by interpreting $R$ so that it well-orders $\ma$ with the order type $\kappa$.
Now it is easy to find an $L_{\kappa^+ \omega}$-sentence $\phi$ such that $\ma^{*} \models \phi$ and for all $L \cup \{R\}$-structures $\mb$, if the interpretation of $R$ well-orders $\mb$ with the order-type $\kappa$ and $\mb \models \phi$, then $\mb \cong \ma^{*}$. 
 Also, it is easy to find a $L_{\kappa^+ \omega}$-sentence $\psi$ such that for all $L \cup \{R\}$-models $\mb$, $\mb \models \phi$ if and only if the interpretation of $R$ well-orders $\mb$ with the order type $\kappa$. 
Then the formula $\exists R(\psi \wedge \phi)$ characterizes $\ma$.

Consequently, the interesting question is, whether a structure of cardinality $\kappa$ is $L^2_{\kappa\omega}$-characterizable. Generalizing the work of Ajtai \cite{MR548473} Keskinen proved

\begin{teoreema}[\cite{Kes}]\label{keskinen}If $V=L$, then $L^2_{\kappa\omega}$-equivalence implies isomorphism among structures of cardinality $\kappa$ of a finite vocabulary. On the other hand, there is for every infinite $\kappa$ a forcing extension which preserves cardinals such that in the extension there are two $L^2_{\kappa\omega}$-equivalent non-isomorphic models of cardinality $\kappa$. 
\end{teoreema}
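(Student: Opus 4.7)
The two halves are independent; I would address the $V=L$ direction and the forcing direction separately, modeled on Ajtai's original argument and on the Cohen-real automorphism argument already used in the proof of Theorem 2.1 respectively.

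For the $V=L$ direction, given $\ma$ of cardinality $\kappa$ in a finite vocabulary, I would identify $A$ with $\kappa$ via a bijection and code $\ma$ by a subset $X\subseteq\kappa$. Under $V=L$ there is a canonical well-ordering $<_L$ of $\mathcal{P}(\kappa)$, hence a $<_L$-least $X^{*}$ whose associated structure is isomorphic to $\ma$. The $L^2_{\kappa\omega}$-sentence $\phi_{\ma}$ characterizing $\ma$ would assert: \emph{there exist a well-ordering $R$ of the universe of type $\kappa$, a subset $X$ (under the identification of the universe with $\kappa$ induced by $R$), and a second-order object $M$ of size $\kappa$ coding a transitive set model of a suitable finitely axiomatizable fragment of $\mathrm{ZFC}^{-}+V=L$ that contains $X$, such that $X$ is the $<_L$-least element of $M$ coding a structure with the isomorphism type of the given structure; and the given structure is the one coded by $X$.} Such an $M$ exists by $V=L$ together with L\"owenheim--Skolem, and the absoluteness of $<_L$ across sufficiently rich transitive models forces $X=X^{*}$, so $\ma$ is determined up to isomorphism.

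For the forcing direction I would imitate the template of Theorem 2.1. After a preparatory step ensuring $2^{<\kappa}=\kappa$, force with the $\kappa$-Cohen forcing of partial functions $p:\alpha\to 2$ with $\alpha<\kappa$, ordered by extension; this is $\kappa$-closed and has the $\kappa^{+}$-cc, so all cardinals are preserved. From the generic $G$ build a structure $\ma_G$ of cardinality $\kappa$ whose isomorphism type depends on $G$ via an encoding that is not preserved under some ground-model automorphism $f$ of the forcing. Then $\ma_G\not\cong\ma_{f(G)}$, but any $L^2_{\kappa\omega}$-sentence $\psi$ satisfied by $\ma_G$ is forced by some $p\in G$; applying $f$, the condition $f(p)\in f(G)$ forces $\psi$ for $\ma_{f(G)}$, exactly as in the Hamel-basis proof. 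Hence $\ma_G\equiv_{L^2_{\kappa\omega}}\ma_{f(G)}$ while the two are non-isomorphic.

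The main obstacle is the expressibility step in the $V=L$ half: one must verify that the absoluteness of the canonical well-ordering $<_L$ on $\mathcal{P}(\kappa)$ really is captured by an $L^2_{\kappa\omega}$-sentence, respecting both the bound $\lambda<\kappa$ on conjunction length and the finite-free-variable restriction on infinitary connectives. Passing to second-order quantification over $\kappa$-sized models of $\mathrm{ZFC}^{-}+V=L$ is what makes this go through, since the fragment can be taken finitely axiomatizable. The forcing direction's subtlety is arranging that the ground-model automorphism $f$ genuinely alters the isomorphism type of $\ma_G$ --- the analogue of the ``flip a single bit of the binary expansion'' step at the end of the proof of Theorem 2.1 --- and this constrains how $G$ is encoded into $\ma_G$.
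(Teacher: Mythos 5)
First, note that the paper offers no proof of this statement at all: it is quoted from Keskinen \cite{Kes} (extending Ajtai \cite{MR548473}), so your attempt can only be measured against the standard arguments, not against a proof in the text. Your overall architecture --- condensation under $V=L$, a ground-model automorphism of a Cohen-type forcing for the other half --- is the right one, but the $V=L$ half as written has a genuine gap. The single sentence $\phi_{\ma}$ you describe is vacuous: evaluated in an arbitrary structure $\mb$ of cardinality $\kappa$, the phrase ``the given structure'' refers to $\mb$ itself, and under $V=L$ every such $\mb$ admits a $<_L$-least code $X^{*}_{\mb}$ together with a transitive witness $M$, so every $\mb$ of cardinality $\kappa$ satisfies $\phi_{\ma}$. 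Moreover no single characterizing sentence can exist in general: under $V=L$ there are at most $2^{<\kappa}=\kappa$ sentences of $L^2_{\kappa\omega}$ over a finite vocabulary but $2^{\kappa}$ isomorphism types of cardinality $\kappa$, so the theorem is unavoidably about the full $L^2_{\kappa\omega}$-\emph{theory}, not about one sentence. The repair is to run the argument bit by bit: using a quantified well-order of type $\kappa$ and the formulas $\psi_{\alpha}$ defining individual ordinals $\alpha<\kappa$ (exactly as in the proof of Theorem~\ref{main}), one writes for each $\alpha<\kappa$ a sentence $\sigma_{\alpha}$ asserting ``$\alpha$ belongs to the $<_L$-least code of a structure isomorphic to me, as computed in some second-order-quantified transitive model of enough $\mathrm{ZF}+V=L$''; condensation makes each $\sigma_{\alpha}$ unambiguous, and $L^2_{\kappa\omega}$-equivalence then yields $X^{*}_{\ma}=X^{*}_{\mb}$, hence $\ma\cong\mb$. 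The condensation/absoluteness idea in your sketch is correct; it is the packaging into one sentence that fails.

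In the forcing half you have essentially restated the problem rather than solved it: the entire mathematical content is the construction of a structure $\ma_G$ such that some ground-model automorphism $f$ fixing a prescribed condition provably changes the isomorphism type, while every $L^2_{\kappa\omega}$-sentence of the extension is decided by a name that $f$ fixes. (The latter point is where closure of the forcing is used --- for regular $\kappa$ a $<\kappa$-closed poset adds no subsets of any $\lambda<\kappa$, so all $L^2_{\kappa\omega}$-sentences lie in the ground model; note your poset is not $\kappa$-closed when $\kappa$ is singular, yet the theorem claims all infinite $\kappa$.) The paper's own Theorems~\ref{v2te} and~\ref{pakotus} show what such a construction must look like: Ehrenfeucht--Mostowski models built over linear orders indexed by a generic set $S_G$ of ordinals, with isomorphism type an invariant of $S_G$ modulo the nonstationary ideal, and with $f$ replacing $S_G$ by its complement above $\operatorname{dom}p$. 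Without some such device --- which is precisely the ``flip one bit'' step you correctly identify as the crux --- the non-isomorphism $\ma_G\not\cong\ma_{f(G)}$ is unsupported, so this half remains a template rather than a proof.
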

  
\begin{teoreema}
Assume CH. If we add $\omega_1$ many Cohen reals, then in the extension there is no $B\subseteq\oR$ such that the following hold:
\begin{enumerate}
\item $B$ is a Hamel basis for $\oR$, i.e. a basis for $\oR$ as a $\mathbb{Q}$-vector space.
\item  The structure $(\oR, +, \times, 0, 1, B)$ is characterizable in $L^{2}_{\omega_1 \omega}$.
\end{enumerate}  On the other hand, if $V=L$, then such a $B$ does exist.
\end{teoreema}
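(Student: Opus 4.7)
The plan is to mimic the bit-flipping proof of the previous theorem, adapted to the product forcing $\mathbb{P} = \textrm{Fn}(\omega_1 \times \omega, 2)$ and to formulas of $L^{2}_{\omega_1 \omega}$. I work in the extension $V[G]$; for each $\alpha < \omega_1$ the $\alpha$-th Cohen real $r^G_\alpha \in \mathbb{R}$ is read off coordinate-wise from $\bigcup G$. Suppose toward a contradiction that $B \subseteq \mathbb{R}$ is a Hamel basis in $V[G]$ and $\phi$ is an $L^{2}_{\omega_1 \omega}$-sentence characterizing $(\mathbb{R}, +, \times, 0, 1, B)$. As in the previous theorem I replace $B$ by a basis $B'$ containing $1$ and rewrite $\phi$ into an $L^{2}_{\omega_1 \omega}$-formula $\psi(x)$ (with rational parameters from $V$) such that $B' = \{x : \psi(x)\}$. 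The rewriting only appends one second-order existential, finitely many first-order quantifiers, and finite Boolean connectives to $\phi$, so $\psi$ remains in $L^{2}_{\omega_1 \omega}$.

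Next I fix $\alpha = 0$ and write $r^G_0 = q_0 + q_1 b_1 + \cdots + q_n b_n$ with $q_i \in \mathbb{Q}$, $q_i \neq 0$ for $1 \leq i \leq n$, and $b_i \in B' \setminus \{1\}$ pairwise distinct. Taking $\mathbb{P}$-names $\dot{b}_i$ for $b_i$ and invoking the forcing theorem for $L^{2}_{\omega_1 \omega}$, I find a condition $p \in G$ that forces the displayed expansion together with $\psi(\dot{b}_i)$, $\dot{b}_i \neq \check{1}$ for each $i$, and pairwise distinctness of the $\dot{b}_i$. I then pick $m < \omega$ with $(0, m) \notin \textrm{dom}(p)$ and $(\bigcup G)(0, m) = 0$ (which exists by genericity), and let $f$ be the automorphism of $\mathbb{P}$ in $V$ that flips only the single bit at coordinate $(0, m)$. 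Setting $H = f[G]$ gives a $\mathbb{P}$-generic with $V[H] = V[G]$, and since $f(p) = p$ also $p \in H$.

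The contradiction now mirrors the previous theorem. In $V[H] = V[G]$ the equation $r^H_0 = q_0 + q_1 b_1^H + \cdots + q_n b_n^H$ holds with $b_i^H := \dot{b}_i[H] \in B' \setminus \{1\}$ pairwise distinct, using that $V[G] = V[H]$ so $\psi$ picks out the same $B'$ in both. Moreover $r^H_0 - r^G_0 = 2^{-(m+1)} =: q \in \mathbb{Q} \setminus \{0\}$, so subtracting the two expansions writes $q$ as a $\mathbb{Q}$-linear combination of elements of $B' \setminus \{1\}$, while $q = q \cdot 1$ is its representation in $B'$ using the basis element $1$. Uniqueness of representation in $B'$ then forces $q = 0$, contradicting $q \neq 0$. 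The $V = L$ direction is identical to that of the previous theorem: a $\Sigma^{1}_{2}$ well-ordering of $\mathbb{R}$ makes a greedy Hamel basis $B$ second-order definable, hence $L^{2}_{\omega_1 \omega}$-characterizable.

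The main technical point I expect to work out carefully is the forcing theorem for $L^{2}_{\omega_1 \omega}$-formulas, i.e.\ that some $p \in G$ forces the compound statement $\psi(\dot{b}_i) \wedge \cdots$ above. This reduces to defining the forcing relation for $L^{2}_{\omega_1 \omega}$ by induction on formula complexity, with second-order quantifiers interpreted via $\mathbb{P}$-names for subsets of the real field in $V[G]$ and the countable Boolean connectives treated in the standard way. The role of CH is to ensure $|\mathbb{R}|^{V[G]} = \aleph_1$ (preserved since $\mathbb{P}$ is ccc of size $\aleph_1$), so that $L^{2}_{\omega_1 \omega}$-characterization is the intended regime for the structures under consideration.
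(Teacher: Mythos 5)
There is a genuine gap, and it is exactly the point that distinguishes this theorem from the previous one. You treat the characterizing sentence $\phi$ (and hence $\psi$) as if it were an object of the ground model $V$: you say $\psi$ has ``rational parameters from $V$'' and then write $p\Vdash \psi(\dot b_i)\wedge\cdots$ over $V$. But an $L^2_{\omega_1\omega}$-sentence is not a finite object; it is coded by a countable set (a well-founded labelled tree), and the forcing adds new reals, hence new such sentences. So the counterassumption only gives you $\phi\in V[G]$, and the forcing relation $p\Vdash\psi(\dot b_i)$ over $V$ is not even well defined, since $\psi$ need not belong to $V$. This is precisely why the theorem adds $\omega_1$ Cohen reals rather than one, and why the paper's proof begins by decomposing $\mathbb{P}\cong\mathbb{P}_\alpha\times\mathbb{P}^\alpha$: one first shows, via a nice name for the countable code of $\phi$ and the c.c.c., that $\phi\in V[G\cap\mathbb{P}_\alpha]$ for some $\alpha<\omega_1$, and only then runs the bit-flip argument using the real read off from column $\alpha$, which is Cohen-generic over the intermediate model $V[G\cap\mathbb{P}_\alpha]$ that actually contains $\psi$. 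Your choice to fix $\alpha=0$ and force over $V$ skips this absorption step, and the argument fails whenever $\phi$ codes generic information (for instance information from column $0$ itself, which your bit flip then disturbs).

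The rest of your outline (the reduction from $\phi$ to a definable basis $B'$ containing $1$, the single-bit automorphism, the uniqueness-of-representation contradiction, and the $V=L$ direction) matches the paper and is fine. Your closing remark identifies ``the forcing theorem for $L^2_{\omega_1\omega}$'' as the technical point to be checked, but the real issue is not defining the forcing relation for infinitary second-order formulas; it is locating a submodel of $V[G]$ that contains the formula and over which the remaining forcing is still generic. Once you insert that step, your proof becomes the paper's proof.
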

 
\begin{proof}
We force using the partial order   
\begin{displaymath}
\mathbb{P}=\{p:  \omega_1 \times \omega \rightarrow \{0,1\} \, | \, |p|<\omega \},
\end{displaymath}
ordered by inclusion. For each $\alpha < \omega_1$, let $\mathbb{P}_{\alpha} = \{p \in \mathbb{P} \, | \, \textrm{dom}(p) \subseteq \alpha \}$ and $\mathbb{P}^{\alpha} = \{p \in \mathbb{P} \, | \, \textrm{dom}(p) \cap \alpha=\emptyset \}$. Now, for each $\alpha < \omega_1$, $\mathbb{P} \cong \mathbb{P}_{\alpha} \times \mathbb{P}^{\alpha}$, and if $G$ is $\mathbb{P}$-generic over $V$, then $G \cap \mathbb{P}_{\alpha}$ is $\mathbb{P}_{\alpha}$-generic over $V$, $G \cap \mathbb{P}^{\alpha}$ is $\mathbb{P}^{\alpha}$-generic over $V[G \cap \mathbb{P}_{\alpha}]$, and $V[G \cap \mathbb{P}_{\alpha}][G \cap \mathbb{P}^{\alpha}] = V[G]$.

We make a counter-assumption that there is a  $L^{2}_{\omega_1 \omega}$ -formula $\phi$ characterizing structures that satisfy the conditions (1)-(3). We can view the formula $\phi$ as a tree with no infinite branches where each node is labelled with some atomic or negated atomic formula or with one of the symbols $\vee$, $\wedge$, $\exists v_i$, $\forall v_i$, $\exists X_i$, or $\forall X_i$, for $i \in \omega$. Thus, we can code the formula $\phi$ as a subset of some countable set $S \in V$. Hence, $\phi$ has a nice name 
\begin{displaymath}
\dot{\phi} = \bigcup \{\{\check{s}\} \times A_s \, | \, s \in S\},
\end{displaymath}
where each $A_s$ is an antichain in $\mathbb{P}$.
As $\mathbb{P}$ has the c.c.c. and $S$ is countable in $V$, $\dot{\phi}$ is countable, and thus it is a $\mathbb{P}_{\alpha}$-name for some $\alpha < \omega_1$. Therefore $\phi \in V[G\cap \mathbb{P}_{\alpha}]$ for some $\alpha < \omega_1$.  

We define the real $r^G$ so that its binary expansion is 
\begin{equation*}
r^G=0.{r_0}^G{r_1}^G{r_2}^G...
\end{equation*}
where
\begin{displaymath}
{r_i}^G=(\bigcup G)(\alpha,i),
\end{displaymath}
and proceed as in the proof of the previous theorem to get a contradiction.

The claim concerning $V=L$ follows, as in the previous theorem, from the existence, assuming $V=L$, of a second order definable well-order of the reals.
\end{proof}

\section{First order theories and second order logic}\label{fot}

Above we found that consistently there is no \sock\ expansion of the field of real numbers by a predicate for a Hamel basis. We aim now at a  general result which would tell us what kind of non-\sock\ structures we can find and, respectively, what kind of structures of cardinality $\kappa$ are {\em a fortiori} \sock. 

We take the approach of stability theory, more exactly that of {\em classification theory} \cite{Sh}. For the concepts of shallowness, stability, superstability, DOP and OTOP we refer to standard texts in stability theory, e.g. \cite{Sh}. 

Suppose $T$ is a countable complete first order theory. In a major result called {\em Main Gap Theorem} Shelah proved that if $T$ is shallow, superstable without DOP and OTOP, then in uncountable cardinalities $\kappa$ the models of $T$ can be described up to isomorphism in terms of dimension-like invariants; in particular, $L_{\infty\kappa}$-equivalence implies isomorphism among models of $T$ of cardinality $\kappa$. On the other hand, if $T$ is not shallow, superstable without DOP and OTOP, then in each uncountable cardinality $\kappa$ the theory $T$ has non-isomorphic models that are highly equivalent, in particular $L_{\infty\kappa}$-equivalent.

An intuitive description of the Main Gap Theorem is that by simply looking at the stability-theoretic properties of a first order theory $T$ we can decide whether its models can be analyzed in terms of geometric and algebraic concepts, or whether all of it also permits  models that are so complicated that no geometric or algebraic analysis will reveal their isomorphism type. In the latter case further work (\cite{HytTu,FrHyKu}) has extended the original result of Shelah and has yielded more and more complicated models.

Our goal  here is to show that models of a shallow superstable first order theory without DOP or OTOP are \sock, and in the opposite case the theory has models that are non \sock. We can fulfill this only partially.

We start with a result which is illuminating even if not the most general:

\begin{teoreema}\label{v2te}
Assume CH. If we add a Cohen subset to $\omega_1$, then in the forcing extension the following holds: If $T$ is a countable complete unstable theory then it has models $\ma$ and $\mb$ of size $\aleph_1$ such that $\ma$ and $\mb$ are $L_{\omega_1 \omega}^{2}$-equivalent but non-isomorphic.
\end{teoreema}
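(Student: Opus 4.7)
The plan is to transfer a Keskinen-style construction of two $L^{2}_{\omega_1 \omega}$-equivalent but non-isomorphic linear orders of size $\aleph_1$ in the forcing extension into models of the unstable theory $T$ by an Ehrenfeucht--Mostowski (EM) construction based on the order property of $T$.

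Step 1: build the linear orders. Let $\mathbb{P}$ be the forcing adding one Cohen subset $F\colon\omega_1\to 2$ of $\omega_1$ by countable conditions. Under CH, $\mathbb{P}$ is countably closed of cardinality $\aleph_1$, hence preserves all cardinals and cofinalities. Using $F$ as a bookkeeping tape, I recursively construct two linear orders $I_0, I_1 \in V[G]$ of size $\aleph_1$ so that the bit $F(\alpha)$ encodes a small local asymmetry at stage $\alpha$. A density argument, in the style of the proof of Theorem~\ref{keskinen}, rules out any isomorphism $I_0 \cong I_1$ in $V[G]$. For $L^{2}_{\omega_1 \omega}$-equivalence, one exploits that each such sentence $\phi$ is a countable well-founded tree, whose nice name lies in $V[F\restriction \alpha]$ for some $\alpha < \omega_1$; an automorphism of the tail forcing that swaps the coding bits above $\alpha$ interchanges $I_0$ and $I_1$, forcing $\phi$ to hold in one iff it holds in the other, whence $I_0 \equiv_{L^{2}_{\omega_1 \omega}} I_1$.

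Step 2: pass to EM-models of $T$. Since $T$ is unstable, pick a formula $\varphi(\bar x,\bar y)$ and order-indiscernibles witnessing the order property, and let $\Phi$ be the associated EM-template. For each linear order $I$, let $\ma(I)$ be the resulting EM-model of $T$ of size $|I|+\aleph_0$, with a $\varphi$-ordered skeleton $\langle \bar a_i:i\in I\rangle$. Set $\ma=\ma(I_0)$, $\mb=\ma(I_1)$. Non-isomorphism is the standard direction: every element of $\ma(I)$ is a Skolem term in finitely many skeleton elements, and the $\varphi$-order on the skeleton is definable in the model, so from any isomorphism $\ma\cong\mb$ one reads off an isomorphism of the induced $\varphi$-orders and hence, by a standard Shelah-style reconstruction, of $I_0$ and $I_1$, contradicting Step 1. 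For $L^{2}_{\omega_1 \omega}$-equivalence, one sets up a uniform syntactic translation $\phi \mapsto \phi^{*}$ sending $L^{2}_{\omega_1 \omega}$-sentences about $\ma(I)$ to $L^{2}_{\omega_1 \omega}$-sentences about $I$, exploiting that the full theory of $\ma(I)$ is determined by the $I$-order of its skeleton. The equivalence produced in Step 1 then transfers, giving $\ma\equiv_{L^{2}_{\omega_1 \omega}} \mb$.

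The main obstacle is the translation step for second-order quantifiers. A second-order variable in $\ma(I)$ ranges over arbitrary subsets of the EM-model, most of which have no direct avatar over $I$. One must show that, modulo $L^{2}_{\omega_1 \omega}$-equivalence, it suffices to represent subsets of $\ma(I)$ by indexed families of subsets of finite powers of $I$ (indexed by the countably many Skolem-term patterns), and that this coding commutes with second-order quantification and respects the countable conjunctions and disjunctions of the logic. Setting up this translation is where the interaction between infinitary logic and the EM-construction has to be handled carefully, and is the only place where the specific form of $L^{2}_{\omega_1 \omega}$ — in particular the restriction to finitely many variables in each infinitary conjunction — enters crucially.
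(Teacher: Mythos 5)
Your proposal has two genuine gaps, one in each half of Step 2, and both are at the heart of why the paper's argument is organized the way it is.

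First, non-isomorphism does not transfer from the index orders to the Ehrenfeucht--Mostowski models. From $I_0\ncong I_1$ you cannot conclude $\mathrm{EM}(I_0,\Phi)\ncong \mathrm{EM}(I_1,\Phi)$: an isomorphism of the EM-models need not respect the skeletons, and the ``standard Shelah-style reconstruction'' you invoke simply does not recover the indexing order up to isomorphism from the model. This is exactly the difficulty that Shelah's many-models machinery is designed to overcome. The paper's Lemma~\ref{lemma1} builds the index orders from subsets $S$ of $S^{\omega_1}_\omega$ (via $\Phi(S)=\sum_{\alpha<\omega_1}\eta_\alpha$ with $\eta_\alpha$ either $\mathbb{Q}$ or $1+\mathbb{Q}$) precisely so that the \emph{models} satisfy $\ma(S)\cong\ma(S')$ if and only if $S\bigtriangleup S'$ is nonstationary; the hard direction (stationary difference implies non-isomorphic models) is a substantive theorem cited from \cite{FrHyKu}, not a routine transfer. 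You need this invariant-style coding, not merely two non-isomorphic linear orders.

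Second, the translation $\phi\mapsto\phi^{*}$ of $L^2_{\omega_1\omega}$-sentences about $\ma(I)$ into sentences about $I$ --- which you yourself flag as the main obstacle --- is not available and is not needed. Second-order quantifiers over the EM-model range over arbitrary subsets, and there is no coding of these by families of subsets of finite powers of $I$ that commutes with the semantics. The paper sidesteps this entirely: since the forcing is countably closed it adds no new countable sequences, so every $L^2_{\omega_1\omega}$-sentence $\psi$ of the extension lies in $V$; the statement ``every $\mathcal{C}$ with $\phi(S_G,\mathcal{C},\Phi_T)$ satisfies $\psi$'' is then forced by some condition $p$ with domain $\gamma$, and the automorphism of $\mathbb{P}$ flipping the generic above $\gamma$ fixes $p$ while sending $S_G$ to a set differing from $S^{\omega_1}_\omega\setminus S_G$ only by a subset of $\gamma$, hence by a nonstationary set. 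By the isomorphism criterion of Lemma~\ref{lemma1} the resulting model is isomorphic to $\mb$, so $\mb\models\psi$. The equivalence argument thus runs directly on the models of $T$, and its correctness rests on the same nonstationarity criterion that your Step 2 is missing. As written, your plan cannot be completed without importing both of these ingredients.
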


Note that the model $\ma$ in the above theorem is necessarily non-$L^2_{\omega_1\omega}$-characterizable, because it cannot be distinguished from $\mb$ even with an $L^2_{\omega_1\omega}$-theory and not even just among models of cardinality $\aleph_1$. Theorem~\ref{keskinen} shows that we cannot hope to get the above result provably in ZFC. However, in a simple forcing extension we get non-$L^2_{\omega_1\omega}$-characterizable models of cardinality $\aleph_1$ for any unstable theory. It is worth noting that completeness and unstability of a first order theory are absolute properties in set theory.
   
For the proof of the above theorem we first sketch the proof of a crucial lemma. The lemma and thus also Theorem \ref{v2te} hold also for theories with OTOP and for unsuperstable theories. For more details we refer to \cite{FrHyKu}.

\begin{lemma}\label{lemma1}
Assume CH. Let $T$ be a countable unstable theory. For each $S \in \mathcal{P}(S_\omega^{\omega_1})$ we may define a model $\ma(S)  \models T$ of size $\omega_1$ such that for $S, S' \in \mathcal{P}(S_\omega^{\omega_1})$, $\ma(S) \cong \ma(S')$ if and only if $S \bigtriangleup S'$ is nonstationary. Moreover, there is a a template $\Phi_T$ (as in \cite{Sh}, Section V2. Lemma 2.4), and a first order formula (in set theory) $\phi(y,z, \omega_1, \Phi_T)$ such that for all $S \subseteq S_\omega^{\omega_1}$ and $\ma$, $\phi(S, \ma, \omega_1, \Phi_T)$ holds if and only if $\ma \cong \ma(S)$.
 \end{lemma}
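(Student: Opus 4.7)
The plan is to apply the Ehrenfeucht--Mostowski machinery for unstable theories developed in \cite{Sh}, Chapter~V.2. Because $T$ is unstable, there is a formula $\psi(\bar x,\bar y)$ with the order property; after Skolemizing $T$ and recording the quantifier-free types of a $\psi$-indiscernible sequence, one obtains a template $\Phi_T$ such that for every linear order $I$ the associated EM-model $EM(I,\Phi_T)$ is a model of $T$ of cardinality $|I|+\aleph_0$, whose skeleton $(\bar a_i)_{i\in I}$ is $\Phi_T$-indiscernible and linearly ordered by $\psi$ in the same order type as $I$. For each $S\subseteq S^{\omega_1}_\omega$ I build an $\omega_1$-like linear order $I(S)=\sum_{\alpha<\omega_1}J_\alpha$ by the standard Shelah recipe: the summand $J_\alpha$ is chosen so that the point of $I(S)$ lying just above $\sum_{\beta<\alpha}J_\beta$ has left-cofinality $\omega$ exactly when $\alpha\in S$, and uncountable left-cofinality otherwise. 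I then set $\ma(S):=EM(I(S),\Phi_T)$; by standard properties of EM-models this is a model of $T$ of size $\omega_1$.

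The heart of the argument is to verify that $\ma(S)\cong\ma(S')$ iff $S\Delta S'$ is nonstationary. The easy direction: if $S\Delta S'$ is nonstationary, pick a club $C$ on which $S$ and $S'$ agree; a back-and-forth of length $\omega_1$, whose bookkeeping is permitted under CH, produces an order isomorphism $I(S)\cong I(S')$, which functorially lifts to an isomorphism $\ma(S)\cong\ma(S')$. For the hard direction, assume $f:\ma(S)\to\ma(S')$ is an isomorphism; using indiscernibility of the skeletons and the order property of $\psi$ one shows that there is a club $D\subseteq\omega_1$ on which $f$ maps the $\psi$-ordering of the skeleton of $I(S)$ into the $\psi$-ordering of the skeleton of $I(S')$ in an order-preserving and sufficiently cofinal way, so that the left-cofinality of the cut below $\alpha$ in $I(S)$ agrees with the corresponding cofinality in $I(S')$ for every $\alpha\in D$; hence $S\cap D=S'\cap D$ and $S\Delta S'$ is nonstationary. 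This hard direction is the main obstacle and is precisely where Shelah's unstability technology (in the form of \cite{Sh} V.2, Lemma~2.4, refined in \cite{FrHyKu}) is used in an essential way. CH enters both to run the $\aleph_1$-length back-and-forth in the easy direction and, in the hard direction, to ensure that enough types over countable subskeletons are realized to carry out the cofinality reading-off.

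Finally, for the ``moreover'' clause, the map $S\mapsto I(S)$ is given by an absolute transfinite recursion uniform in $S$ and $\omega_1$, and the EM-construction $I\mapsto EM(I,\Phi_T)$ is uniform in $I$ and $\Phi_T$ by an elementary set-theoretic definition. Composing these definitions, the predicate ``$\ma\cong\ma(S)$'' is expressible by a single first-order set-theoretic formula $\phi(S,\ma,\omega_1,\Phi_T)$, as required.
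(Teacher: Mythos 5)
Your construction deviates from the paper's in a way that matters at exactly the step you leave vague. The paper (following \cite{FrHyKu}, Theorem 80) does not build $\ma(S)$ over an $\omega_1$-like linear order whose cuts encode $S$ via cofinalities; it first builds the linear order $\Phi(S)=\sum_{\alpha<\omega_1}\eta_\alpha$ with $\eta_\alpha=\mathbb{Q}$ or $1+\mathbb{Q}$ according to whether $\alpha\in S$, then forms a tree $T(S)$ with $\omega+1$ levels whose top level consists precisely of the increasing $\omega$-sequences that \emph{have a least upper bound} in $\Phi(S)$, codes that tree as a linear order, and takes the EM model over it. The invariant that survives an isomorphism is therefore ``is the limit type of this $\omega$-branch realized by a skeleton element at level $\omega$,'' not ``what is the left-cofinality of the cut below $\alpha$ in the index order.'' This distinction is the crux: an isomorphism $f:\ma(S)\to\ma(S')$ carries no obligation to map skeleton to skeleton, so the cofinality of a cut in $I(S)$ is not visibly a property of the abstract model $\ma(S)$ at all. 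Your sentence asserting that $f$ maps the $\psi$-ordering of one skeleton into the other ``in an order-preserving and sufficiently cofinal way'' on a club is precisely the claim that needs proof, and the references you invoke (\cite{Sh} V.2, Lemma 2.4 and \cite{FrHyKu}) prove the corresponding claim for the tree-based construction, not for yours. So either you must redo the hard direction from scratch for $\omega_1$-like orders (showing that realization/omission of suitable cut types over the submodels $N_\alpha$ generated by initial segments is club-invariant), or you should switch to the tree construction so that the cited results actually apply.

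A smaller point: your account of where CH enters does not match its actual role. In the paper CH is used to guarantee $|\ma(S)|=\omega_1$, because the top level of $T(S)$ contributes up to $2^\omega$ many skeleton elements; the easy direction ($S\bigtriangleup S'$ nonstationary $\Rightarrow$ $\Phi(S)\cong\Phi(S')$) is a direct segment-by-segment matching along a club avoiding $S\bigtriangleup S'$ and needs no $\omega_1$-length back-and-forth and no CH. Your ``moreover'' paragraph on uniform definability is fine and agrees with the paper.
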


\begin{proof}
(Sketch)
For each set $S \subseteq S_\omega^{\omega_1}$ we define the linear order $\Phi(S)$ as follows:
\begin{displaymath}
\Phi(S)=\sum_{\alpha < \omega_1} \eta_\alpha,
\end{displaymath}
where
\begin{displaymath}
\eta_\alpha= \left\{ \begin{array}{ll}
\mathbb{Q} & \textrm{if $\alpha \notin S$}\\1+ \mathbb{Q}  & \textrm{if $\alpha \in S$}, 
\end{array} \right.
\end{displaymath}
where $\mathbb{Q}$ denotes the usual ordering of rational numbers. 

Denote by ${Tr}^\omega$ the set of structures $\ma =(T, <, \ll, (T_n)_{n\le \omega}, h)$, where $T=\{f: \, n \rightarrow \eta \, | \, n \le \omega \}$ for some linear order $\eta$, and the other symbols are interpreted as follows:
\begin{itemize}
\item $f<g \iff f \subseteq g$,
\item $f \ll g$ $\iff$ $f<g$ or there is some $n \in \textrm{dom } f \cap \textrm{dom } g$ such that $f \upharpoonright n = g \upharpoonright n$ and $f(n)<_\eta g(n)$, where $<_\eta$ denotes the ordering relation of $\eta$,
\item $h(f,g)$ is the maximal common initial segment of $f$ and $g$, and
\item $T_n=\{f \in T \, | \, \textrm{dom }f=n \}$.
\end{itemize}
For each $S \subseteq S_\omega^{\omega_1}$ we define the tree $T(S) \in {Tr}^\omega$ by
\begin{displaymath}
\begin{array}{lcl}
T(S)=\Phi(S)^{<\omega} &\cup &\{f: \omega \rightarrow \Phi(S)  
f \textrm{ is increasing and }\\
&&\textrm{has a least upper bound} \},
\end{array}\end{displaymath}
with the relations $<$, $\ll$, $h$ and $T_n$ interpreted in the natural way.

Denote by $L$ the vocabulary of $T$. Let $T^{*}$ be the Skolemisation of $T$ and let $L^{*}$ be the vocabulary of $T^{*}$. Now, similarly as in proof of Theorem 80, Claim 4, in \cite{FrHyKu}, we can define for every tree $T(S)$ a suitable linear order $L(T(S))$ and denote by $\ma^{1}(S)$ the Ehrenfeucht-Mostowski model $EM^1(L(T(S)), \Phi_T)$, where $\Phi_T$ is the template as in \cite{Sh}, Section V2, Lemma 2.4. 
As in \cite{FrHyKu}, we may define inside $\ma^{1}(S)$ the $T(S)$-skeleton of $\ma^{1}(S)$, $\{a_f \, | \, f \in T(S)\}$, so that
\begin{itemize}
\item There is a mapping $T(S) \rightarrow (\textrm{dom } \ma^{1}(S))^n$ for some $n \in \omega$, $f \mapsto a_f$, such that $\ma^{1}(S))^n= \textrm{SH}(\{a_f \, | \, f \in T(S)\})$.
\item  $\ma(S)=\ma^{1}(S)\upharpoonright L$ is a model of $T$.
\item $\{a_f \, | \, f \in T(S)\}$ is indiscernible in $\ma^{1}(S))$, i.e. if $\bar{f}, \bar{g} \in T(S)$ and $\textrm{tp}_\textrm{q.f.} (\bar{f}/\emptyset)=\textrm{tp}_\textrm{q.f.} (\bar{g}/\emptyset)$, then $\textrm{tp}(a_{\bar{f}} /\emptyset)=\textrm{tp}(a_{\bar{g}} /\emptyset)$. This assignment of types is independent of $S$ and depends only on the template $\Phi_T$.
\end{itemize}
Clearly the size of $\ma(S)$ is $2^\omega = \omega_1$. Let $\phi(x,y,\Phi_T)$ be the first-order formula  defining the connection between the sets  $S \subseteq S_\omega^{\omega_1}$ and the models $\ma(S)$. 

Suppose now $S, S' \subseteq S_\omega^{\omega_1}$ and $S \bigtriangleup S'$ is nonstationary. We show that $\Phi(S) \cong \Phi(S')$, and thus $T(S) \cong T(S')$, whence $\ma(S) \cong \ma(S')$. Let $C$ be a cub set such that $C \cap (S \bigtriangleup S')=\emptyset$. Enumerate it by $C=\{\alpha_i \, | \, i<\omega_1\}$, where $(\alpha_i)_{i<\omega_1}$ is an increasing sequence containing all its limit points. Denote for each $S \subseteq S_\omega^{\omega_1}$, and $\alpha< \beta < \omega_1$,
\begin{displaymath}
\Phi(S, \alpha, \beta) =\sum_{\alpha \leq i < \beta} \eta_i.
\end{displaymath}
Now we can write $$\Phi(S)=\bigcup_{i<\omega_1} \Phi(S, \alpha_i, \alpha_{i+1})$$ and $$\Phi(S')=\bigcup_{i<\omega_1} \Phi(S', \alpha_i, \alpha_{i+1}).$$ These are disjoint unions, so it suffices to show that for all $i < \omega_1$, the orders $\Phi(S, \alpha_i, \alpha_{i+1})$ and $\Phi(S', \alpha_i, \alpha_{i+1})$ are isomorphic. Since we chose $C$ so that it does not intersect  $S \bigtriangleup S'$, we have for all $i < \omega_1$,
\begin{displaymath}
\alpha_i \in S \Leftrightarrow \alpha_i \in S'.
\end{displaymath}
Thus, for each $i < \omega_1$, we have either
\begin{displaymath}
 \Phi(S, \alpha_i, \alpha_i+1) \cong \mathbb{Q} \cong  \Phi(S', \alpha_i, \alpha_i+1),
 \end{displaymath}
 if $\alpha_i \notin S$, or
\begin{displaymath}
 \Phi(S, \alpha_i, \alpha_i+1) \cong 1+\mathbb{Q} \cong  \Phi(S', \alpha_i, \alpha_i+1),
 \end{displaymath}
 if $\alpha_i \in S$.

It can also be shown that if $S \bigtriangleup S'$ is stationary, then $\ma(S) \ncong \ma(S')$. This is done as in the proof of Theorem 80, Claim 5 in \cite{FrHyKu}.
\end{proof}

Now we can prove Theorem~\ref{v2te}:
 
\begin{proof}
We force using the partial order 
\begin{displaymath}
\mathbb{P}=\{f: \alpha \rightarrow \{0,1\} \, | \, \alpha < \omega_1\},
\end{displaymath}
ordered by inclusion. This partial order is $\omega_1$-closed and has the $\omega_2$-c.c. Thus the forcing preserves cardinals and does not add subsets to $\omega$. In particular, CH holds in the forcing extension and every countable theory is in the ground model $V$.
 
 It follows from Lemma \ref{lemma1} that there is a first-order formula $\phi(y,z, \omega_1, \Phi_T)$, where $\Phi_T \in V$, such that for each $S \in \mathcal{P}(S_\omega^{\omega_1})$ there is some model $\ma \models T$ of size $\omega_1$ for which $\phi(S, \ma, \omega_1, \Phi_T)$ holds. Moreover, for all $S, S'  \in \mathcal{P}(S_\omega^{\omega_1})$ and $\ma, \mb \models T$, if $\phi(S, \ma, \Phi_T) \wedge \phi(S', \mb, \Phi_T)$ holds, then $\ma \cong \mb$ if and only if $S \bigtriangleup S'$ is nonstationary.  
  
Let $G$ be a $\mathbb{P}$-generic filter over $V$ and denote
\begin{displaymath}
S_G=(\bigcup G)^{-1} (1)\cap S_\omega^{\omega_1}.
\end{displaymath}
We choose models $\ma, \mb \models T$ of size $\omega_1$ such that $\phi(S_G, \ma, \Phi_T)$ and $\phi(S_\omega^{\omega_1} \setminus S_G, \mb, \Phi_T)$.  
 As $S_G \bigtriangleup (S_\omega^{\omega_1} \setminus S_G)=S_\omega^{\omega_1}$ is stationary, $\ma \ncong \mb$.
 
We show that $\ma$ and $\mb$ are $L_{\omega_1 \omega}^{2}$-equivalent. 
Let $\psi \in L_{\omega_1 \omega}^{2}$, and suppose $\ma \models \psi$. Now $\psi \in V$.
As the formula $\phi$ defines the isomorphism type, there is a condition $p \in G$ such that
 \begin{displaymath}
 p \Vdash \forall \mathcal{C}(\phi(\dot{S_G}, \mathcal{C}, \check{\Phi_T}) \rightarrow \mathcal{C} \models \check{\psi}).
 \end{displaymath}
 Let $\textrm{dom }p=\gamma$, and denote
\begin{displaymath}
G^*_\gamma=\{f^*_\gamma \, | \, f \in G \},
\end{displaymath}
where 
\begin{displaymath}
f^*_\gamma(\alpha)=\left\{ \begin{array}{ll}
f(\alpha) & \textrm{if $\alpha < \gamma$}\\ 1-f(\alpha) & \textrm{otherwise.}\\  
\end{array} \right.
\end{displaymath}
The function $f \mapsto f^*_\gamma$ is an automorphism of $\mathbb{P}$ and thus $G^*_\gamma$ is $\mathbb{P}$-generic over $V$ and $V[G]=V[G^*_\gamma]$. Also, 
\begin{displaymath}
S_{G^*_\gamma}=(S_G \cap \gamma) \cup (S_\omega^{\omega_1} \setminus (S_G \cup \gamma)).
\end{displaymath}
Let $\mathcal{C}$ be such that $\mathcal{C} \models T$, $|\mathcal{C}| = \omega_1$, and  $\phi(S_{G^*_\gamma}, \mathcal{C}, \Phi_T)$ holds in $V[G]$. Now, $p \in G^*_\gamma$, and thus $\mathcal{C} \models \psi$. The set $X=(S_\omega^{\omega_1} \setminus S_G) \bigtriangleup S_{G^*_\gamma}$ is nonstationary as $X \subseteq \gamma$. Therefore, $\mathcal{C} \cong \mb$, and hence $\mb \models \psi$.
 \end{proof}

Above $\aleph_1$ we get stronger results:

\begin{teoreema}\label{pakotus}
Assume that  $\kappa$ and $\lambda$ are cardinals such that $\kappa = \lambda^+=2^\lambda$, $\lambda^{<\lambda}=\lambda>\omega$. 
Then in the forcing extension that we get by adding a Cohen subset to $\kappa$, the following holds: If $T$ is a countable complete theory that is either unsuperstable or superstable with OTOP, then it has two non-isomorphic $L_{\kappa \omega}^{2}$-equivalent models of size $\kappa$.
If we assume further that  $\lambda > 2^\omega$, then the claim holds also for superstable theories with DOP.
\end{teoreema}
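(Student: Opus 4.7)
The plan is to lift the argument of Theorem~\ref{v2te} from $\aleph_1$ to $\kappa$. The cardinal arithmetic hypotheses $\kappa=\lambda^+=2^\lambda$ and $\lambda^{<\lambda}=\lambda>\omega$ are tuned so that the Cohen forcing
\[
\mathbb{P}=\{f:\alpha\to\{0,1\}\mid\alpha<\kappa\},
\]
ordered by inclusion, is $\kappa$-closed and has the $\kappa^+$-c.c. (the latter via $2^{<\kappa}=2^\lambda=\kappa$ and a $\Delta$-system argument), hence preserves cardinals and adds no $<\kappa$-sequence of ground-model objects. In particular, since every $L^2_{\kappa\omega}$-formula has fewer than $\kappa$ subformulas with labels in $V$, every $L^2_{\kappa\omega}$-formula appearing in $V[G]$ already lies in $V$.

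The core technical work is to prove a $\kappa$-level analogue of Lemma~\ref{lemma1}: for each $T$ in the listed classes there exist a Shelah template $\Phi_T\in V$ and a set-theoretic formula $\phi(S,\ma,\kappa,\Phi_T)$ such that for every $S\subseteq S^\kappa_\omega$ some Ehrenfeucht--Mostowski model $\ma(S)\models T$ of cardinality $\kappa$ witnesses $\phi(S,\ma(S),\kappa,\Phi_T)$, and for $S,S'\subseteq S^\kappa_\omega$ one has $\ma(S)\cong\ma(S')$ iff $S\bigtriangleup S'$ is nonstationary. For unsuperstable $T$ this is obtained by replacing $\Phi(S)=\sum_{\alpha<\omega_1}\eta_\alpha$ of Lemma~\ref{lemma1} by $\sum_{\alpha<\kappa}\eta_\alpha$, then building $T(S)$ and the EM-model $EM^1(L(T(S)),\Phi_T)$ as in \cite{FrHyKu}; the OTOP case is an analogous variant using the OTOP template of \cite{Sh}, V.2. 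The DOP case instead uses Shelah's DOP-coding of stationary sets into models of superstable DOP theories, which is precisely what requires the additional assumption $\lambda>2^\omega$: one codes $S$ by realizing or omitting certain orthogonal regular types above a countable indiscernible, and needs room for $2^\omega$-many parameters before exhausting $\kappa$.

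Granting this lemma, the rest copies the proof of Theorem~\ref{v2te}. Let $G$ be $\mathbb{P}$-generic over $V$, set $S_G=(\bigcup G)^{-1}(1)\cap S^\kappa_\omega$, and pick $\ma,\mb\models T$ of size $\kappa$ in $V[G]$ with $\phi(S_G,\ma,\kappa,\Phi_T)$ and $\phi(S^\kappa_\omega\setminus S_G,\mb,\kappa,\Phi_T)$. Since $S_G\bigtriangleup(S^\kappa_\omega\setminus S_G)=S^\kappa_\omega$ is stationary, $\ma\not\cong\mb$. For $L^2_{\kappa\omega}$-equivalence, suppose $\ma\models\psi$; then $\psi\in V$ by the first paragraph, and some $p\in G$ forces $\forall\mathcal{C}\,(\phi(\dot S_G,\mathcal{C},\check{\kappa},\check{\Phi_T})\to\mathcal{C}\models\check{\psi})$. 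Set $\gamma=\mathrm{dom}\,p<\kappa$ and let $f^*_\gamma$ agree with $f$ below $\gamma$ and flip its values at and above $\gamma$; the bit-flip $f\mapsto f^*_\gamma$ is an automorphism of $\mathbb{P}$ in $V$ fixing $p$, so $G^*:=\{f^*_\gamma:f\in G\}$ is $\mathbb{P}$-generic with $V[G^*]=V[G]$ and $p\in G^*$. Because $S_{G^*}\bigtriangleup(S^\kappa_\omega\setminus S_G)\subseteq\gamma$ is bounded in $\kappa$, hence nonstationary, the canonical model for $S_{G^*}$ is isomorphic to $\mb$, and as $p\in G^*$ this model satisfies $\psi$; hence $\mb\models\psi$. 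The reverse direction is symmetric.

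The main obstacle is clearly the DOP clause of the $\kappa$-level lemma. The unsuperstable and OTOP constructions are essentially direct $\kappa$-level copies of Lemma~\ref{lemma1}; the DOP case requires a substantially more intricate and genuinely different coding of stationary sets, which is exactly the point at which the stronger hypothesis $\lambda>2^\omega$ enters. I expect the real technical work of the proof to consist in carrying that coding out uniformly and definably enough to produce the required set-theoretic formula $\phi$, so that the automorphism argument above applies verbatim in all three cases.
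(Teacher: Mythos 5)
Your proposal follows essentially the same route as the paper: the paper's proof is exactly ``argue as in Theorem~\ref{v2te}'', with the $\kappa$-level analogue of Lemma~\ref{lemma1} (the definable formula $\phi(y,z,\kappa,\Phi_T)$ coding stationary sets into non-isomorphic models) imported from Theorems~80 and~87 of \cite{FrHyKu}, and your forcing and automorphism argument matches the paper's verbatim. The only divergence is that you sketch the construction of the key lemma yourself rather than citing it, and your guess about where the three cases split (with $\lambda>2^\omega$ entering only for DOP) is consistent with how the paper delegates them.
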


\begin{proof}
This can be proved as Theorem \ref{v2te}. The fact that there is a suitable first-order formula  $\phi(y,z, \kappa, \Phi_T)$ follows from Theorems 80 (in the case that $T$ is unstable or superstable with DOP or OTOP) and 87 (in the case that $T$ is stable but not superstable) in \cite{FrHyKu}. 
\end{proof}

For even bigger cardinals we get a complete characterization:

\begin{teoreema}\label{main}
Suppose that $\kappa$ is a regular cardinal such that $\kappa=\aleph_\alpha$, $\beth_{\omega_1}(|\alpha|+\omega) \leq \kappa$ and $2^\lambda<2^\kappa$ for all $\lambda < \kappa$. Let $T$ be a countable complete first order theory. Then every model of $T$ of size $\kappa$ is \sock\  if and only if $T$ is a shallow, superstable theory without DOP or OTOP.
\end{teoreema}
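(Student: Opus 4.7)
The statement is a biconditional, and I would prove each direction separately by leveraging Shelah's Main Gap machinery together with the non-structure templates of \cite{FrHyKu}.

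\emph{Positive direction: if $T$ is shallow, superstable, without DOP or OTOP, then every model of size $\kappa$ is \sock.} I would invoke Shelah's structure theorem: under these hypotheses every $\ma\models T$ of cardinality $\kappa$ is prime and minimal over a tree $(\ma_\eta)_{\eta\in I}$ of countable elementary submodels, where $I$ has depth $<\omega_1$ (by shallowness), edges correspond to regular types, and the isomorphism type of $\ma$ is pinned down by ``the shape of $I$ together with a dimension function on $I$.'' The bound $\beth_{\omega_1}(|\alpha|+\omega)\le\kappa$ is calibrated so that the set of shapes that can actually arise has size at most $\kappa$, and can therefore be enumerated inside $L^2_{\kappa\omega}$. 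I would then assemble an $L^2_{\kappa\omega}$-sentence $\phi_\ma$ that existentially quantifies (second-orderly) over a decomposition $(I,(\ma_\eta)_{\eta\in I})$ of the ambient model, uses an $L_{\kappa\omega}$-conjunction (of length $<\kappa$) to assert that this decomposition has the same shape as that of $\ma$ --- invoking Scott-sentence-type descriptions for the countable local pieces --- and records each dimension via a second-order cardinality assertion. Uniqueness of prime and minimal models over the decomposition then forces any model of $\phi_\ma$ to be isomorphic to $\ma$.

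\emph{Negative direction: if $T$ fails one of the structural conditions, then some model of size $\kappa$ is non-\sock.} I would extend Lemma~\ref{lemma1} from the unstable case to each non-structural class on the Main Gap dividing line --- unsuperstable, superstable with DOP or OTOP, and deep superstable --- using the templates catalogued in \cite{FrHyKu}. This yields, for each $S\subseteq S^\kappa_\omega$, a model $\ma(S)\models T$ of size $\kappa$ whose isomorphism class equals its equivalence class modulo the nonstationary ideal. To finish I would produce $S,S'$ with $S\Delta S'$ stationary (so $\ma(S)\not\cong\ma(S')$) but $\ma(S)\equiv_{L^2_{\kappa\omega}}\ma(S')$: the assumption $2^\lambda<2^\kappa$ for all $\lambda<\kappa$, combined with standard splittings of $S^\kappa_\omega$ into many pairwise-stationary-disjoint pieces, gives $2^\kappa$ nonstationary-distinct $S$'s, while an Ehrenfeucht--Fra\"iss\'e-type game argument (controlling second-order moves through the EM-skeleton, as in \cite{FrHyKu,HytTu}) shows that many of these pairs are $L^2_{\kappa\omega}$-equivalent.

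\emph{Main obstacle.} I expect the hardest step to be the game analysis on the negative side, because the spoiler in the $L^2_{\kappa\omega}$ game may play arbitrary second-order witnesses (for example subsets coding $S$ itself), so the duplicator must simultaneously copy elements \emph{and} subsets between $\ma(S)$ and $\ma(S')$ while preserving the tree skeleton produced by the template. Pushing the forcing argument of Theorem~\ref{v2te} down to ZFC using only the cardinal arithmetic $2^\lambda<2^\kappa$ and $\beth_{\omega_1}(|\alpha|+\omega)\le\kappa$ --- these bounds being exactly what guarantees enough room in the nonstationary ideal to satisfy the book-keeping constraints of the game --- is the crux of the proof.
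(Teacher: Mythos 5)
Your positive direction is essentially the paper's argument: a tree decomposition of depth $<\omega_1$ whose local pieces and dimensions are recorded by an $L^2_{\kappa\omega}$-sentence, with $\beth_{\omega_1}(|\alpha|+\omega)\le\kappa$ calibrated so that the recursively defined sets of ``shape'' formulas stay of size $<\kappa$ (strictly less, so that the conjunctions are legal; your ``at most $\kappa$'' is not quite enough). Two small corrections there: the paper works with an a-decomposition whose pieces $g(t)$ have size $\le 2^\omega$ rather than countable, and it describes them by enumerating isomorphism types of models with domain $2^\omega\cdot n$ rather than by Scott sentences; primeness of $\mathcal{M}$ over $\bigcup_t g(t)$ then does the job you assign to ``prime and minimal.''

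The negative direction is where your plan genuinely fails. You propose to produce, in ZFC from the stated cardinal arithmetic alone, two non-isomorphic $L^2_{\kappa\omega}$-equivalent models via EM templates, stationary sets, and a transfinite Ehrenfeucht--Fra\"iss\'e game controlling second-order moves. But Theorem~\ref{keskinen} (Keskinen, quoted in the paper) says that under $V=L$, $L^2_{\kappa\omega}$-equivalence implies isomorphism for structures of size $\kappa$; and under $V=L$ the hypotheses of Theorem~\ref{main} ($2^\lambda<2^\kappa$ for $\lambda<\kappa$, $\beth_{\omega_1}(|\alpha|+\omega)\le\kappa$) are satisfied for suitable $\kappa$. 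So the object you are trying to build provably does not exist in some models of ZFC consistent with the hypotheses; the ``main obstacle'' you identify is not merely hard, it is insurmountable without forcing (which is exactly why Theorems~\ref{v2te} and~\ref{pakotus} are stated as forcing results). The statement to be proved is weaker than what you aim for --- only that \emph{some} model of size $\kappa$ has no characterizing sentence --- and the paper gets it by pure counting: the Main Gap non-structure half gives $2^\kappa$ pairwise non-isomorphic models of size $\kappa$, while every $L^2_{\kappa\omega}$-sentence is coded by a subset of some $\lambda<\kappa$, so there are at most $2^{<\kappa}$ sentences; since $\mathrm{cf}(2^\kappa)>\kappa$ and $2^\lambda<2^\kappa$ for all $\lambda<\kappa$ give $2^{<\kappa}<2^\kappa$, the pigeonhole principle leaves some isomorphism class with no characterizing sentence. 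You should replace your entire negative direction by this counting argument.
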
  

\begin{proof}
Suppose first $T$ is either an unsuperstable theory or a superstable theory that is deep or has DOP or OTOP. Then by Shelah's Main Gap Theorem (see \cite{Sh}, Section X2.6) $T$ has $2^\kappa$ nonisomorphic models of size $\kappa$. 
However, every formula in $L_{\kappa \omega}^{2}$ can be coded as a subset of $\lambda$ for some $\lambda < \kappa$. 
Thus, there are at most $2^{<\kappa}$ many formulas in $L_{\kappa \omega}^{2}$. 
Since $\textrm{cf } 2^\kappa>\kappa$, and by our assumptions $2^\lambda<2^\kappa$ for every $\lambda < \kappa$, we have  $2^{<\kappa}<2^\kappa$.
Hence, by the pigeonhole principle, the models of $T$ of size $\kappa$ cannot be characterized in $L_{\kappa \omega}^{2}$.

On the other hand, suppose $T$ is a shallow, superstable theory without DOP or OTOP, and let $\mathcal{M}$ be a model of $T$ of size $\kappa$. We show that  $\mathcal{M}$ is characterizable in $L_{\kappa \omega}^{2}$.

As in Definiton 3.1. in \cite{Sh}, Section XI, we mean by $A \subseteq^a B$ that for every $\bar{a} \in A$ and $\bar{b} \in B$ there is $\bar{b}' \in A$ such that
\begin{displaymath}
\textrm{stp(}\bar{b}/\bar{a}\textrm{)}=\textrm{stp(}\bar{b}'/\bar{a}\textrm{)}.
\end{displaymath}

By Claim 2.6 in \cite{Sh}, section XI, there is a $(\mathbf{T^{t}_{\aleph_0}}, \subseteq^a)$ -decomposition of $\mathcal{M}$, i.e. a triple $(P, f, g)$ satisfying the following conditions:
\begin{enumerate}[(a)]
\item $P=(P, \ll)$ is a tree, $f: P \setminus \{r\} \rightarrow \bigcup_{n \in \omega}M^n$, where $r$ is the root of $P$, and $g: P \rightarrow \mathcal{P}(M)$ are functions,  
\item if $t,u,v \in P$ are such that $t=u^{-}$ and $u=v^{-}$, then $\textrm{tp}(f(v)/g(u))$ is orthogonal to $g(t)$,
\item if $t,u,v \in P$, $t=u^{-}$, and $u \ll v$ or $u=v$, then $\textrm{tp}_{*}(g(v)/g(t) \cup f(u))$ is almost orthogonal to $g(t)$,
\item if $t \in P$, then $\{f(u) \, | \, t=u^{-}\}$ is a maximal set such that it is  independent over $g(t)$ and satisfies (b),
\item if $t,u \in P$, and $t=u^{-}$, then $\textrm{tp}(f(u)/g(t))$ is regular,
\item if $t , u \in P$, $t=u^{-}$, $g(u) \subseteq A \subseteq M$ and $\textrm{tp}_*(A/ g(t) \cup f(u))$ is almost orthogonal to $g(t)$, then $g(u) \subseteq^a A$,
\item $g(r) \subseteq^a M$.
\end{enumerate}  
(See also Definitions 2.4 and 2.5.  and Context 2.1 in \cite{Sh}, Section XI.)
Notice that as $T$ is superstable, the models $g(t)$ may be chosen so that $|g(t)| \leq 2^\omega$ for each $t \in P$.

By Theorem 2.8 and Conclusion 3.17 in \cite{Sh}, Section XI, the model $\mathcal{M}$ is prime over $\bigcup_{t\in P} g(t)$ (see also Definition 2.2). 
Thus all models of $T$ with the same decomposition (up to isomorphism)  are isomorphic to $\mathcal{M}$. 
We will show that it is possible to write a $L_{\kappa \omega}^{2}$-sentence $\phi$ such that if $\mathcal{N}$ is an $L$-structure, then $\mathcal{N} \models \phi$ if and only if there is a decomposition of $\mathcal{N}$ isomorphic to $(P, f, g)$. 
 Then $\phi$ characterizes structures isomorphic to $\mathcal{M}$.  
          
Note that we can code all ordinals and cardinals less than $\kappa$ as elements of $M$. 
We first introduce a new binary predicate $<$ and say that it is a well-order of $M$ of order type $\kappa$ (this can be easily done in second-order language). Now, for each ordinal $\alpha < \kappa$, we can define a formula $\psi_\alpha$ such that
\begin{displaymath}
\psi_\alpha (x) \iff x \in \kappa \textrm{ and } x \geq \alpha.
\end{displaymath}
This is done by induction. If $\alpha=0$, then
\begin{displaymath}
\psi_\alpha (x) = "x=x",
\end{displaymath}
if $\alpha=\beta+1$ for some ordinal $\beta < \kappa$, then
\begin{displaymath}
\psi_\alpha (x) = \exists y (y < x \wedge \psi_{\beta}(y)),
\end{displaymath}
and if $\alpha$ is a limit ordinal, then
\begin{displaymath}
\psi_\alpha (x) = \bigwedge_{\beta < \alpha}\psi_\beta.
\end{displaymath}
Using these formulas, we can express "$x=\alpha$" for any ordinal $\alpha < \kappa$ as "$\psi_\alpha(x) \wedge \neg \psi_{\alpha+1}(x)$". 
This allows us to use ordinals  $\alpha < \kappa$ as parameters in our formulas.
For instance, for $\alpha_0, \ldots, \alpha_n < \kappa$,  and a formula $\phi(x_0, \ldots, x_n)$, we can code $\phi(\alpha_0, \ldots, \alpha_n)$ as $\forall x_0 \ldots \forall x_n(\bigwedge_{i \leq n} \psi_{\alpha_i}(x_i) \rightarrow \phi(x_0, \ldots, x_n))$.

It is also easy to define a predicate $P_{card}$ such that $P_{card}(x)$ holds if and only if $x$ codes some cardinal $\lambda < \kappa$. Let $\alpha$ be the ordinal such that $\kappa=\aleph_\alpha$. It follows from our assumptions that $\alpha < \kappa$. We can now code (and use as parameters) all cardinals $\aleph_\beta$ for $\beta < \alpha$ in a similar fashion as the ordinals. This will allow us to describe the cardinalities of various sets.
 
Since $T$ is shallow, the tree $P$ has no infinite branches, and since $T$ is superstable, we can choose the models $g(t)$ so that $|g(t)| \leq 2^\omega$ for all $t \in P$.
To be able to speak of elements of  the set $\bigcup_{t \in P} g(t)$, we enumerate this set using a function 
\begin{displaymath}
h: P \times 2^\omega \cdot \omega \longrightarrow \bigcup_{t \in P} g(t),
\end{displaymath}
 with the following properties. 
For each $t \in P$, 
\begin{displaymath}
g(t)=\{h(t,i) \, | \, i< 2^\omega \cdot (\textrm{ht}(t)+1)\}, 
\end{displaymath}
and if $u \in P$ is such that $u \ll t$, then $h(u,i)=h(t,i)$ for each $i < 2^\omega \cdot \textrm{ht}(t)$.  As $|M|= \kappa > 2^\omega \cdot \omega$, we can find a set $U \subseteq M$ such that $(U,<) \cong 2^\omega \cdot \omega$ and use the set $P \times U$ to enumerate $\bigcup_{t \in P} g(t)$. Thus our sentence $\phi$ will begin as follows.
$$\begin{array}{ll}
\phi=\exists P  \exists U \exists < \exists \ll \exists f \exists h (&``P \cap U = \emptyset " \wedge\\
& ``(P, \ll) \textrm{ is a tree with no infinite branches}" \wedge\\ 
&``< \textrm{ is a well-ordering}" \wedge\\
& ``(U,<) \cong 2^\omega \times \omega" \wedge \ldots).
\end{array}$$
Here $f$ denotes the function from the definition of the decomposition of a model, and $h: P \times U \rightarrow M$ is the function used to enumerate $\bigcup_{t \in P} g(t)$ as described above. All the properties written in shorthand can be easily expressed in second-order language.  
      
As $2^\omega$ is small compared to $\kappa$, we can express isomorphism types of models of  cardinality $2^\omega$. This will allow us to describe the properties listed in the definition of decomposition. 
There are at most $2^{2^\omega}$ nonisomorphic models of $T$ of size $2^\omega$.
For each $n < \omega$, let $\{\mathcal{M}^n_i \, | \, i<2^{2^\omega} \}$ enumerate models of $T$ that have domain $2^\omega \cdot n$.
Suppose now $t \in P$ and $n=\textrm{ht}(t)+1$. 
For each $i<2^{2^\omega}$ we can write a formula $\phi_i^n(t)$ expressing that the mapping $H: h(t, j) \mapsto j$ is an isomorphism from $g(t)$ to $\mathcal{M}^n_i$.
   
Now we can express the properties needed when describing the decomposition. For instance, in case (b), we can use isomorphism types to describe orthogonality as follows. 
Let $t,u,v \in P$ be such that $t=u^{-}$ and $u=v^{-}$.
Let $n=\textrm{ht}(u)$.
As before, let $\{\mathcal{M}_i^{n+2} \, | \, i<2^{2^\omega} \}$ enumerate all models of $T$ with domain $2^\omega \cdot (n+2)$.
For each $i$, let $p^i_j \in S^{\mathcal{M}_i^{n+2}}(2^\omega \cdot (n+1))$ enumerate the types orthogonal to $2^\omega \cdot n$. 
As $T$ is superstable, we have $j<2^\omega$.
Now "$\textrm{tp}(f(v)/g(u))$ is orthogonal to $g(t)$" can be expressed using the formula
\begin{displaymath}
\bigwedge_{i<2^{2^\omega}}\phi_i^{n+2}(t) \rightarrow (\bigvee_{j<2^\omega} "\textrm{tp}(f(v)/g(u))=H^{-1}(p^i_j)").
\end{displaymath}
Note that at least one of the formulas $\phi_i^{n+2}(t)$ must be true as we are listing all isomorphism types. The concepts of almost orthogonal and regular can be described in a similar fashion.  

For (d) we need to be able to express that for each $t \in P$ the set $S=\{f(u) \, | \, t=u^{-}\}$ is independent over $g(t)$. This can be done, as we know that the set $S$ is independent over $g(t)$ if and only if every finite tuple from $S$ is independent over $g(t)$.
Let $n=\textrm{ht}(t)+1$. 
For $i<2^{2^\omega}$, $j<2^\omega$, let $p^i_j \in S^{\mathcal{M}_i^{n+1}}(2^\omega \cdot n)$ enumerate the types of finite tuples that are independent over $2^\omega \cdot n$.
 Now we can express `$S$ is independent over $g(t)$" with the formula
\begin{displaymath}
\begin{array}{ll}
\bigwedge_{i<2^{2^\omega}}(\phi_i^{n+1} \rightarrow \bigwedge_{m < \omega} \forall u_0 ... \forall u_m (&({u_0}^{-}=t \wedge ... \wedge {u_m}^-=t) \rightarrow \\
&\bigvee_{j<2^\omega}  p^i_j(f(u_0), ..., f(u_m)))). 
\end{array}
\end{displaymath} 
In second-order language it is easy to express also that $S$ is maximal with respect to (b).
 
For (f) and (g) we need to be able to express $g(t) \subseteq^a A$ for $t \in P$ and $A \subseteq M$. 
Let $n=\textrm{ht}(t)+1$.
For each $\bar{a} \in g(t)$, let $\psi_{i,j}^{\bar{a}}$, $i<2^{2^\omega}$, $j<\omega$, enumerate all formulas such that $\psi_{i,j}^{\bar{a}}(\bar{x},\bar{y}, H(\bar{a}))$ defines a finite equivalence relation in $\mathcal{M}_i^n$. Using the enumeration for $g(t)$ we can write a formula equivalent to
\begin{displaymath}
\bigwedge_{i<2^{2^\omega}} (\phi_i^n \rightarrow \bigwedge_{\bar{a} \in g(t)} (\forall \bar{b} \in A \exists \bar{c} \in g(t) (\bigwedge_{i<\omega}\psi_{i,j}^{\bar{a}}(\bar{c},\bar{b},\bar{a})))).
\end{displaymath}
This expresses $g(t) \subseteq^a A$. As there are only $2^\omega$ many finite tuples in $g(t)$, the first conjunction does not cause any problems. 
 
Finally, we have to describe the skeleton of $\mathcal{M}$ up to isomorphism.
With each node $t \in P$ we associate a formula $\theta_t (x)$ such that $\mathcal{M} \models \theta_t(t)$.  
We will define  $\theta_t(x)$ by induction from top down. First we need the notion of the rank of $t$, $\textrm{rk}(t)$. It is defined as follows.  
\begin{displaymath}
\textrm{rk}(t)=\left\{ \begin{array}{ll}
0 & \textrm{if $t$ is a leaf of $P$,}
\\ \textrm{sup} \{\textrm{rk}(u)+1 \, |\, t \ll u \} & \textrm{otherwise.}\\  
\end{array} \right.
\end{displaymath}
Now we can define the formula $\theta_t(x)$ by induction on $\textrm{rk}(t)$. If $\textrm{rk}(t)=0$, we let
$$\begin{array}{l}
\theta_t(x)=P(x) \wedge \\
``\neg \exists y (\textrm{tp}(y/h(\{x\} \times 2^\omega \cdot (\textrm{ht}(t)+1))) \perp h(\{x\} \times 2^\omega \cdot \textrm{ht}(t)))" 
\wedge \\
\bigwedge \textrm{tp} (h(\{t\} \times 2^\omega \cdot (\textrm{ht}(t)+1))/\emptyset),
\end{array}$$
where the symbol $\perp$ denotes orthogonality.
    
Suppose now we have defined $\theta_u(x)$ for all $u \in P$ such that $\textrm{rk}(u)<\textrm{rk}(t)$. Let
\begin{displaymath}
X_t=\{\theta_u \, | \, u \in P, \textrm{ rk}(u)<\textrm{rk}(t)\},
\end{displaymath}
and for each $u$, let
\begin{displaymath}
\lambda_u = |\{y \in P \, | \, t=y^{-} \textrm{ and } \mathcal{M} \models \theta_u (y) \}|.
\end{displaymath}
The idea is to write a formula $\theta_t(x)$ expressing that for each $u$, the node $x$ has $\lambda_u$ many direct successors $y$ such that $\theta_u(y)$ holds, and describing the types of $g(t)$ and $f(t)$. Thus
\begin{eqnarray*}
\theta_t(x)= P(x) \wedge \bigwedge_{\theta_u \in X_t} \exists P_u(\forall y(P_u(y) \leftrightarrow (x=y^{-} \wedge \theta_u(y))) \wedge |P_u|=\lambda_u) \\
\wedge  \bigwedge \textrm{tp} (h(\{t\} \times 2^\omega \cdot (\textrm{ht}(t)+1))/\emptyset).
\end{eqnarray*}
To be able to take conjunctions of the formulas in $X_t$ we have to prove that $|X_t|<\kappa$ for every $t \in P$. We do this by induction on $\textrm{rk}(t)$.
 
If $\textrm{rk}(t)=0$, then $t$ is a leaf, so $X_t=\emptyset$. 
Suppose now that $\textrm{rk}(t)=i>0$. 
For each $j<i$, denote $X_j=\{\theta_u \, | \, u \in P, \textrm{ rk}(u)< j\}$, and let $\lambda_j$ be a cardinal such that $|X_j| \leq \lambda_j$.
Let $\alpha$ be the ordinal such that $\kappa=\aleph_\alpha$. 
Assume first that $i=j+1$ for some ordinal $j$.
Let $v \in P$ be such that $\textrm{rk}(v)=j$.
For each formula $\psi \in X_j$, the number of nodes $w \in P$ such that $w^{-}=v$ and $\theta_w=\psi$, is either some $n \in \omega$ or $\aleph_\beta$ for some $\beta \leq \alpha$.
Moreover, there are $2^{2^\omega}$ possibilities for the type
\begin{displaymath}
\textrm{tp} (h(\{v\} \times 2^\omega \cdot (\textrm{ht}(v)+1))/\emptyset). 
\end{displaymath}
Thus, 
\begin{displaymath}
|X_t| \leq 2^{2^\omega} \cdot (\omega+|\alpha|)^{\lambda_j}.
\end{displaymath}
Suppose now that $i$ is a limit ordinal. Then $X_t=\bigcup_{j<i} X_j$, and we get
\begin{displaymath}
|X_t| \leq 2^{2^\omega} \cdot (\omega+|\alpha|)^{\textrm{sup}_{j<i}\lambda_j}.
\end{displaymath}

Hence we get the following bounds for $|X_t|$. If $\textrm{rank}(t)=0$, then $|X_t| < 1$.
If $\textrm{rank}(t)=1$, then
 \begin{displaymath}
 |X_t| \leq 2^{2^\omega}\cdot (\omega+|\alpha|) \leq \beth_{2}(\omega+|\alpha|).
 \end{displaymath}
In general, if $t \in P$ and $\textrm{rank}(t)=\beta$, then 
\begin{displaymath}
|X_t| \leq \beth_{\beta+1}(\omega+|\alpha|).
\end{displaymath}
Let $r$ be the root of $P$.
As $T$ is a shallow theory, $\textrm{rk}(r)<\omega_1$. 
Thus for every $t \in P$, $|X_t|< \beth_{\omega_1}(|\alpha|+\omega) \leq \kappa$.
 
The sentence
\begin{displaymath}
\exists x ((\theta_r(x)) \wedge \neg \exists y (P(y) \wedge y \ll x))
\end{displaymath}
describes the skeleton up to isomorphism. 
As we are able to express all the properties from the definition of decomposition, we may now compose a sentence $\phi$   characterizing all models of $T$ isomorphic to $\mathcal{M}$.
 \end{proof}

\section{Second order characterizability in models of CA}\label{Henkin}

Monadic second order logic over a structure $(A,R_0,...,R_n)$ can be thought of as first order logic over the enhanced structure \begin{equation}\label{sort1}
(A\cup P_1,A,P_1,\in,R_0,...,R_n),
\end{equation}
where $P_1=\mathcal{P}(A)$ and $\in$ is restricted to $A\times\mathcal{P}(A)$. Respectively, monadic third order logic can be related to first order logic of 
\begin{equation}\label{sort2}(A\cup P_1\cup P_2,A,P_1,P_2\in,R_0,...,R_n),\end{equation}
where $P_2=\mathcal{P}(P_1)$. For non-monadic higher order logics similar translations exist. One can reduce the entire type theory to first order logic in this way. The price one pays is that structures are limited to the very special form of (\ref{sort1}) and (\ref{sort2}). Henkin \cite{MR12:70b} took the natural step of considering the following more general structures than (\ref{sort1}):\begin{equation}\label{sort3}
(A\cup P_1,A,P_1,E,R_0,...,R_n),
\end{equation}
where $E$ is just a binary predicate $\subseteq A\times P_1$ satisfying the Extensionality Axiom. In addition, the Comprehension Axioms of \cite{MR0351742} are assumed. The Comprehension Axioms say that any definable relation on $A$ is canonically represented by an element of $P_1$. Henkin  \cite{MR12:70b} proved that such models yield a Completeness Theorem for second (and higher) order logic with respect to the obvious rules of inference that were introduced in \cite{MR0351742}. The original model (\ref{sort1}), called the {\em full} model, is of course a special case of (\ref{sort3}) and satisfies the Comprehension Axioms. Thus results about the models (\ref{sort1}) can be considered generalizations of results about the models (\ref{sort3}). When we prove existence results this generality means that our results are weaker than corresponding results about full models. However, our results use respectively weaker assumptions. 

Note that the Comprehension Axioms create unstability in models of the form (\ref{sort3}) even if there are no relations $R_i$ at all. Namely, by means of the Comprehension Axioms one can code finite sequences and manifest the so-called {\em independence property} (\cite{Sh}) of stability theory, a well-known special case of unstability. However, the components of the structure (\ref{sort3}) are not equal: the components $A,R_0,...,R_n$ constitute the underlying mathematical structure, while $P_1$ and $E$ play the auxiliary role of indicating what the range of the second order variables is. Thus it makes sense to ask whether the first order theory of $(A,R_0,...,R_n)$, rather than that of $(A\cup P_1,A,P_1,E,R_0,...,R_n)$, is for example stable or unstable and then analyze what we can say about the second order part encoded by $P_1$ and $E$; e.g. is $(A,R_0,...,R_n)$ second order characterizable in the version of second order logic encoded by $P_1$ and $E$.

We shall now introduce the main concept of this section, the concept of a {\em $\beta$-order-model}. This is the model (\ref{sort3}) taken to higher orders and made more exact. We have a slight bias in favor of finite order logics in our results, as logics of order $\omega$ and higher are conceptually  more complex.
 
 \begin{definition}\label{betamod}
Let $\beta$ be a countable ordinal, and let $L^{*}=\{R_0, \ldots, R_n \}$ be a relational vocabulary. 
Let $L=L^{*} \cup \{P_\beta,<,V, \in \}\cup\{\underline{\alpha}\ | \alpha < \beta\}$, where $P_\beta$ is a unary predicate, $<$, $V$, and $\in$ are binary predicates, and for each ordinal $\alpha<\beta$, $\underline{\alpha}$ is a constant symbol.
Let $\mathcal{W}$ be an $L$-structure such that
\begin{itemize}
\item $V^\mathcal{W} \subseteq {P_\beta}^\mathcal{W} \times (W \setminus {P_\beta}^\mathcal{W})$,
\item $<^\mathcal{W} \subseteq ({P_\beta}^\mathcal{W})^2$, and $({P_\beta}^\mathcal{W}, <^\mathcal{W}) \cong (\beta, \in)$,
\item $\in^\mathcal{W} \subseteq (W \setminus {P_\beta})^2$,
\item for each ordinal $\alpha < \beta$, $\underline{\alpha}^\mathcal{W} \in P_\beta^\mathcal{W}$, and $(\{x \in P_\beta^\mathcal{W} \, | \, x<^\mathcal{W} \underline{\alpha}\}, <^\mathcal{W}) \cong (\alpha, <)$. 
\end{itemize}
For each $\alpha < \beta$, we denote $V_\alpha=\{x \in W \setminus {P_\beta} \, | \, (\underline{\alpha}, x) \in V \}$. 
For an element $x \in W \setminus {P_\beta}$, we write $x \subseteq^\mathcal{W} V_\alpha$, if $y \in V_\alpha$ for every $y \in W \setminus {P_\beta}$ such that $y \in^\mathcal{W} x$.
 
The structure $\mathcal{W}$ is called a \emph{$\beta$-order-model} if in addition it satisfies the following conditions.
\begin{itemize}
\item If $x \in W\setminus (P_\beta^\mathcal{W} \cup V_0)$, then there is an ordinal $\alpha$ such that $\alpha+1<\beta$,  $x \in V_{\alpha+1}\setminus V_\alpha$, and $x \subseteq^{\mathcal{W}} V_\alpha$, 
\item if $\alpha < \gamma < \beta$, then $V_\alpha \subseteq V_\gamma$,  
\item if $\alpha<\beta$ is a limit ordinal, then $V_\alpha = \bigcup_{\gamma < \alpha}V_\gamma$, 
\item $W \setminus {P_\beta}= \bigcup_{\alpha < \beta} V_\alpha$,
\item for $0 \leq i \leq n$, $R_i^\mathcal{W} \subseteq V_0^{n}$, where $n$ is the arity of $R_i$.
\end{itemize}
 
For a $\beta$-order-model $\mathcal{W}$, and $\alpha<\beta$, we denote by $V_\alpha^\mathcal{W}$ the set $V_\alpha$ in that particular model. 
\end{definition}

In a $\beta$-order-model we may express quantifications of order $\alpha$ for any ordinal $\alpha <\beta$ by using what we call bounded quantifiers:

\begin{definition}
Let $L$ be the language of a $\beta$-order-model for some countable ordinal $\beta$, and let $\phi$ be an $L$-formula.
We say that a quantifier in the formula $\phi$ is \emph{bounded} if it is either of the form $\forall x \in V_{\alpha+1} \setminus V_\alpha$ or of the form $\exists x \in V_{\alpha+1} \setminus V_\alpha$ for some variable $x$ and some ordinal $\alpha+1 < \beta$.
We say a formula $\phi$ is \emph{bounded} if it contains only bounded quantifiers.
\end{definition}

\begin{definition}
Let $\ma$ and $\mb$ be $\beta$-order-models for some countable ordinal $\beta$.
We write $\ma \equiv^B \mb$ if the models satisfy the same bounded sentences.
Moreover, we write $\ma \preccurlyeq^B \mb$, if $\ma \subseteq \mb$, and
\begin{displaymath}
\ma \models \phi(\bar{a}) \iff \mb \models \phi(\bar{a})
\end{displaymath}
whenever $\bar{a} \in \ma$ and $\phi$ is a bounded formula.
\end{definition}

\begin{Remark}\label{standardbm}
\begin{enumerate}[(i)]
\item Let $n<\omega$. For every formula $\phi(\bar{x})$ we can find a bounded formula $\psi(\bar{x})$   such that  for every $n$-order-model $\mathcal{W}$ it holds that $\mathcal{W} \models \forall \bar{x}(\phi(\bar{x}) \leftrightarrow \psi(\bar{x}))$.
\item Let $L^{*}=\{R_0, \ldots, R_n \}$ be a relational vocabulary, and let $\mathcal{M}$ be an $L^{*}$-model. 
Let $\beta$ be a countable ordinal.
We define $S_\beta(\mathcal{M})$, \emph{the full $\beta$-order-model over $\mathcal{M}$}, to be the $\beta$-order-model such that
\begin{itemize}
\item $V_0^{S_\beta(\mathcal{M})}=\textrm{dom}(\mathcal{M})$,
\item for every ordinal $\alpha+1 < \beta$, $V_{\alpha+1}^{S_\beta(\mathcal{M})}=\mathcal{P}(V_\alpha^{S_\beta(\mathcal{M})}) \cup V_\alpha^{S_\beta(\mathcal{M})}$,
\item for every limit ordinal $\gamma < \beta$, $V_\gamma^{S_\beta(\mathcal{M})}=\bigcup_{\alpha<\gamma}V_\alpha^{S_\beta(\mathcal{M})}$,
\item $\in$ is interpreted in the natural way.
\end{itemize}
\item Let $n<\omega$, and let $\phi$ be a formula of order $n$. 
There is a bounded first-order formula $\psi$ such that for every $\beta>n$ and every $\mathcal{M} \models \phi$, $S_\beta(\mathcal{M}) \models \psi$.
\item Let $\mathcal{W}$ be a $\beta$-order-model for some countable ordinal $\beta$, and suppose there is a unique element $x \in V_1^\beta \setminus V_0^\beta$ satisfying the formula $\neg \exists y (y \in^{\mathcal{W}} x)$. 
We denote this element by $\emptyset^\mathcal{W}$.
If for all bounded formulas $\phi(x, \bar{y})$, all $\bar{a} \in \mathcal{W}$, and 
all ordinals $\alpha+2 < \beta$, there is some element $b \in (V_{\alpha+2}^\mathcal{W} \setminus  
V_{\alpha+1}^{\mathcal{W}}) \cup \{\emptyset^{\mathcal{W}}\}$ such that 
\begin{displaymath}
\mathcal{W} \models \forall x \in V_{\alpha+1}^\mathcal{W} \setminus V_\alpha^\mathcal{W} (\phi(x, \bar{a}) \leftrightarrow x \in^\mathcal{W} b),
\end{displaymath}
we say that $\mathcal{W}$ satisfies the Comprehension Axioms and denote it by $\mathcal{W} 
\models \textrm{CA}$.
It is easy to see that for any model $\mathcal{M}$ and any countable ordinal $\beta$, $S_\beta(\mathcal{M}) \models \textrm{CA}$ and if  $\mathcal{W}$ and $\mathcal{W}'$ are $\beta$-order-models such that $\mathcal{W} \models \textrm{CA}$ and $\mathcal{W} \equiv^B \mathcal{W}'$, then  $\mathcal{W}' \models \textrm{CA}$.
\item If $\mathcal{W}$ is a $\beta$-order-model for some countable ordinal $\beta$,  $\mathcal{W} \models \textrm{CA}$, and $\alpha+n+2<\beta$ for some ordinal $\alpha$ and some $n<\omega$, then $(V_{\alpha+1}^\mathcal{W} \setminus V_\alpha^\mathcal{W})^n \in V_{\alpha+n+2}^\mathcal{W} \setminus V_{\alpha+n+1}^\mathcal{W}$.
\end{enumerate}
\end{Remark}

Elementary equivalence of $\beta$-order-models $\mathcal{W}$ and $\mathcal{W}'$ implies the equivalence of  $(P_0^{\mathcal{W}}, R_0^{\mathcal{W}}, \ldots, R_n^{\mathcal{W}})$
and $(P_0^{\mathcal{W}'}, R_0^{\mathcal{W}'}, \ldots, R_n^{\mathcal{W}'})$ in higher order logic with the respective two versions of Henkin semantics encoded by $\mathcal{W}$ and $\mathcal{W}'$. If these structures are $L_{\kappa\omega}$-equivalent, the equivalence of the ground level structures extends even to infinitary higher order logic with the respective versions of Henkin semantics. Using the transfinite Ehrenfeucht-Fra\"iss\'e game $\mathop{\rm EF}_t^\kappa (\mathcal{W}^1, \mathcal{W}^2)$ (see e.g. \cite{HytTu} and \cite{vaa}) {\rm (}we define $\kappa^{+}$,$\kappa$ -trees and $\mathop{\rm EF}_t^\kappa$ as in \cite{HytTu}{\rm )} we get even stronger results.

\begin{teoreema}\label{lastbutone}
Let $\beta$ be a countable ordinal and let $\mathcal{W}$ be a $\beta$-order-model such that 
$\textrm{Th}(V_0^\mathcal{W}, R_0^\mathcal{W}, \ldots, R_n^\mathcal{W})$ is unstable. 
Let $\kappa$ be either a regular cardinal or a strong limit cardinal.
There are $\beta$-order-models $\mathcal{W}^1$ and $\mathcal{W}^2$ such that:
\begin{enumerate}
\item  $|W^1|=|W^2|=\kappa$.
\item $\mathcal{W} \equiv^B \mathcal{W}^1 \equiv^B \mathcal{W}^2$. In particular, if $\mathcal{W}$ satisfies CA, then so do $\mathcal{W}^1$ and $\mathcal{W}^2$.  If $\beta < \omega$, then $\mathcal{W} \equiv_{\omega\omega} \mathcal{W}^1$. 
\item  $\mathcal{W}^1 \equiv_{\infty\kappa} \mathcal{W}^2$. 
\item  
$(V_0^{\mathcal{W}^1}, R_0^{\mathcal{W}^1}, \ldots, R_n^{\mathcal{W}^1})\ncong (V_0^{\mathcal{W}^2}, R_0^{\mathcal{W}^2}, \ldots, R_n^{\mathcal{W}^2})$.
\end{enumerate}
\item If $\kappa^{<\kappa}=\kappa$, then for any $\kappa^{+}$,$\kappa$ -tree $t$ we can choose $\mathcal{W}^1$ and $\mathcal{W}^2$ so that in addition to (1)-(4), $\textrm{II} \uparrow \textrm{EF}_t^\kappa (\mathcal{W}^1, \mathcal{W}^2)$.
\end{teoreema}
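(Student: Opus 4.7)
The plan is to adapt the Ehrenfeucht--Mostowski coding of stationary sets underlying Lemma \ref{lemma1} and Theorems \ref{v2te} and \ref{pakotus} to the higher-order framework. I would view $\mathcal{W}$ as a first-order many-sorted $L$-structure in the sense of Definition \ref{betamod}, and set $T^{*}=\mathrm{Th}(\mathcal{W})$. Since the ground structure $(V_0^{\mathcal{W}},R_0^{\mathcal{W}},\ldots,R_n^{\mathcal{W}})$ is uniformly definable in $\mathcal{W}$ by the bounded formula $V_0(x)$ together with the $R_i$'s, an order-property formula at the ground level is an order-property formula of $T^{*}$, so $T^{*}$ is unstable. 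Applying Shelah's template theorem (\cite{Sh}, Section V.2, Lemma 2.4, as invoked in Lemma \ref{lemma1}) to a Skolemization of $T^{*}$ yields a template $\Phi_{T^{*}}$ producing EM-models of $T^{*}$ with indiscernible skeletons witnessing the order property.

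For each $S\subseteq S^{\kappa}_{\mu}$, where $\mu=\omega$ when $\kappa$ is regular and $\mu$ is a suitable regular cardinal $<\mathrm{cf}(\kappa)$ when $\kappa$ is strong-limit singular, I would form the linear order $\Phi(S)$, the tree $T(S)\in\mathrm{Tr}^{\mu}$, its linearization $L(T(S))$, and set
\[
\mathcal{W}(S)\;=\;\mathrm{EM}^{1}(L(T(S)),\Phi_{T^{*}})\upharpoonright L,
\]
exactly as in the sketch of Lemma \ref{lemma1} and the arguments of \cite{FrHyKu}. By arranging the Skolemization so that the $\beta$-many constants $\underline{\alpha}$ exhaust the sort $P_{\beta}$, we force $(P_{\beta}^{\mathcal{W}(S)},<^{\mathcal{W}(S)})\cong(\beta,\in)$, so that $\mathcal{W}(S)$ is a genuine $\beta$-order-model of cardinality $\kappa$.

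Then I pick $S_{1},S_{2}\subseteq S^{\kappa}_{\mu}$ with $S_{1}\bigtriangleup S_{2}$ stationary and set $\mathcal{W}^{i}=\mathcal{W}(S_{i})$. Property (1) is the EM-cardinality count. Property (4) follows from Claim 5 of Theorem 80 of \cite{FrHyKu}: stationarity of $S_{1}\bigtriangleup S_{2}$ forces non-isomorphism of the ground reducts. For (2), both models realize $T^{*}$ in full first-order logic, hence agree with $\mathcal{W}$ on every bounded sentence; the CA-clause is Remark \ref{standardbm}(iv), and if $\beta<\omega$ then Remark \ref{standardbm}(i) reduces full first-order equivalence to bounded equivalence, giving $\mathcal{W}\equiv_{\omega\omega}\mathcal{W}^{1}$. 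For (3) I would build a winning strategy for II in the back-and-forth game of length $\kappa$ between $(\kappa,S_{1})$ and $(\kappa,S_{2})$ by picking, at each stage, a club contained in the complement of $S_{1}\bigtriangleup S_{2}$ inside the already-fixed interval, and then lift the strategy functorially through $S\mapsto\Phi(S)\mapsto T(S)\mapsto L(T(S))\mapsto\mathcal{W}(S)$ to obtain $\mathcal{W}^{1}\equiv_{\infty\kappa}\mathcal{W}^{2}$. For (5), under $\kappa^{<\kappa}=\kappa$ the refinement techniques of \cite{HytTu} allow $S_{1},S_{2}$ to be chosen so that II already wins $\mathrm{EF}_{t}^{\kappa}((\kappa,S_{1}),(\kappa,S_{2}))$ for the prescribed $\kappa^{+},\kappa$-tree $t$, and the same functorial lifting transfers the strategy to $\mathrm{EF}_{t}^{\kappa}(\mathcal{W}^{1},\mathcal{W}^{2})$.

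The main obstacle I expect is ensuring that each $\mathcal{W}(S)$ genuinely satisfies the $\beta$-order-model axioms, not merely first-order consequences thereof: the conditions ``$(P_{\beta},<)\cong(\beta,\in)$'' and ``$W\setminus P_{\beta}=\bigcup_{\alpha<\beta}V_{\alpha}$'' are not first-order when $\beta$ is infinite. I would overcome this by enriching the Skolemization of $T^{*}$ with a unary Skolem function collapsing $P_{\beta}$ onto $\{\underline{\alpha}\,|\,\alpha<\beta\}$, together with, for each element of the remaining sorts, a Skolem witness producing an ordinal constant $\underline{\alpha}$ with $x\in V_{\alpha+1}\setminus V_{\alpha}$. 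Because unstability is witnessed on the ground sort $V_0$, the order-property formula can be chosen to interact trivially with these auxiliary functions, so the coding of $S$ and the lifting arguments go through as in \cite{FrHyKu} and \cite{HytTu}.
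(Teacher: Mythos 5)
Your overall strategy (Ehrenfeucht--Mostowski models over suitably chosen linear orders, with a template extracted from the instability of the ground theory) is the right family of ideas, but there is a genuine gap at the point you yourself flag as the ``main obstacle'', and your proposed fix cannot work. You obtain the template by applying Shelah's template theorem to the \emph{theory} $T^{*}=\mathrm{Th}(\mathcal{W})$, so your EM models are merely models of $T^{*}$; for infinite $\beta$ the conditions ``$(P_\beta,<)\cong(\beta,\in)$'', ``every $x\notin P_\beta\cup V_0$ lies in some $V_{\alpha+1}\setminus V_\alpha$'' and ``$W\setminus P_\beta=\bigcup_{\alpha<\beta}V_\alpha$'' are not first order, and \emph{no} enrichment of the Skolemization can enforce them: requiring a function to map $P_\beta$ onto the set of constants $\{\underline{\alpha}\mid\alpha<\beta\}$, or to return a \emph{standard} level witness, is itself an infinitary condition, and by compactness any first-order axiomatization admits models with nonstandard elements of $P_\beta$ and nonstandard levels. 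The paper's proof solves this semantically rather than syntactically: it first passes to the \emph{bounded} ultrapower $\mathcal{W}^{*}=\prod^B_\lambda\mathcal{W}/D$ for a $\lambda$-regular ultrafilter with $\lambda\geq\beth_{(2^\omega)^{+}}$ (bounded ultrapowers preserve $\beta$-order-modelhood and $\equiv^B$, where an ordinary ultrapower would not), uses $\lambda^{+}$-universality of the ground reduct to realize a $\phi$-ordered sequence of length $\beth_{(2^\omega)^{+}}$ \emph{inside} $V_0^{\mathcal{W}^{*}}$, and then applies Erd\H{o}s--Rado to that sequence so that every finite piece of the Skolem hull of the EM skeleton embeds elementarily into the genuine $\beta$-order-model $\mathcal{W}^{*\mathrm{Sk}}$ (Lemma \ref{lemma2}). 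It is exactly these finite elementary embeddings into a real $\beta$-order-model that force each element of $P_\beta$ in $\mathrm{EM}(I,\Phi)$ to equal some $\underline{\alpha}$ and each other element to sit at a definite level. This ultrapower step is absent from your proposal and is the essential missing idea.

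A secondary problem is your choice of linear orders. You propose the stationary-set coding $S\mapsto\Phi(S)\mapsto T(S)\mapsto L(T(S))$ with $S_1\bigtriangleup S_2$ stationary, importing non-isomorphism from Theorem 80, Claim 5 of \cite{FrHyKu}. That machinery is what the paper uses in Theorems \ref{v2te} and \ref{pakotus}, but there it comes with cardinal arithmetic hypotheses such as $\kappa=\lambda^{+}=2^{\lambda}$ and $\lambda^{<\lambda}=\lambda$; Theorem \ref{lastbutone} assumes only that $\kappa$ is regular or a strong limit, so the non-isomorphism and $\equiv_{\infty\kappa}$ claims are not covered by that route (in particular for singular strong limit $\kappa$). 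The paper instead invokes the general construction of $L_{\infty\kappa}$-equivalent non-isomorphic linear orders (coded from trees with $\omega+1$ levels) from \cite{Sh2}, and the orders of \cite{HytTu} for the $\mathrm{EF}^{\kappa}_t$ refinement under $\kappa^{<\kappa}=\kappa$. You would also need the paper's closure of the Skolem functions under projections and compositions (the functions $f^{0}$ redirected to the constant $c$) to ensure $V_0^{\mathrm{EM}(I,\Phi)}$ is itself generated by the skeleton, so that non-isomorphism of the EM models transfers to non-isomorphism of the \emph{ground reducts} as required in clause (4).
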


Before proving the theorem we prove a crucial lemma:
 
\begin{lemma}\label{lemma2}
Suppose that $\beta$ is a countable ordinal and $\mathcal{W}$ is a $\beta$-order-model. Assume that there are sequences  $w_i \in V_0^{\mathcal{W}}$, for $i<\beth_{(2^\omega)^{+}}$, such that there exists a first-order formula $\phi$, for which
$(V_0^{\mathcal{W}}, R_0^{\mathcal{W}}, \ldots, R_n^{\mathcal{W}}) \models \phi(w_i, w_j)$ if and only if $i<j$.
If $\kappa > \omega$ is a regular cardinal or a strong limit cardinal, then there are $\beta$-order-models $\mathcal{W}^1$ and $\mathcal{W}^2$ such that:
\begin{enumerate}
\item $|V_0^{\mathcal{W}^1}|=|W^1|=|W^2|=|V_0^{\mathcal{W}^2}|$, 
\item
$\mathcal{W}^1 \equiv_{\omega\omega}  \mathcal{W}^2 \equiv_{\omega\omega} \mathcal{W}$, 
\item $\mathcal{W}^1 \equiv_{\infty\kappa} \mathcal{W}^2$,
\item $(V_0^{\mathcal{W}^1}, R_0^{\mathcal{W}^1}, \ldots, R_n^{\mathcal{W}^1}) \ncong (V_0^{\mathcal{W}^2}, R_0^{\mathcal{W}^1}, \ldots, R_n^{\mathcal{W}^2})$.  
\end{enumerate}
If  $\kappa^{<\kappa}=\kappa$, then for any $\kappa^{+}$,$\kappa$ -tree $t$ we can choose $\mathcal{W}^1$ and $\mathcal{W}^2$ so that in addition to (1)-(4), the second player (i.e. the ``equivalence"-player) has a winning strategy in the transfinite Ehrenfeucht-Fra\"iss\'e game $\mathop{\rm EF}_t^\kappa (\mathcal{W}^1, \mathcal{W}^2)$ {\rm (}we define $\kappa^{+}$,$\kappa$ -trees and $\mathop{\rm EF}_t^\kappa$ as in \cite{HytTu}{\rm )} .\end{lemma}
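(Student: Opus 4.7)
The plan is to build $\mathcal{W}^{1}$ and $\mathcal{W}^{2}$ as Ehrenfeucht-Mostowski models from a common template $\Phi$ over two carefully chosen linear orders $I_{1}, I_{2}$ of cardinality $\kappa$. Treating $\mathcal{W}$ as a countable many-sorted first-order structure in its natural language $L$, I first fix a Skolem expansion $(\mathcal{W}^{*}, L^{*})$. The bound $\beth_{(2^{\omega})^{+}}$ is exactly the Erd\H{o}s-Rado bound for extracting indiscernibles in a language with at most $2^{\omega}$ quantifier-free types, so iterated Erd\H{o}s-Rado applied to the $\phi$-chain $(w_{i})_{i<\beth_{(2^{\omega})^{+}}}$ yields, for every $n<\omega$, a $\phi$-ordered, fully $L^{*}$-indiscernible subsequence of length $n$; compactness then produces a template $\Phi$ in the sense of Shelah. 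For each linear order $I$, the resulting model $\mathcal{W}(I) := \mathrm{EM}(I, \Phi)$ is $L^{*}$-elementarily equivalent to $\mathcal{W}^{*}$, carries a $\phi$-indiscernible skeleton $(a_{i})_{i\in I}\subseteq V_{0}$ with $\phi(a_{i},a_{j}) \Leftrightarrow i<_{I} j$, and its $L$-reduct is again a $\beta$-order-model because the clauses of Definition \ref{betamod} form a first-order schema with the countable ordinal $\beta$ as a fixed parameter.

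Second, I pick $I_{1}, I_{2}$ of cardinality $\kappa$ which are $L_{\infty\kappa}$-equivalent but non-isomorphic, with a witnessing back-and-forth moving only sets of size $<\kappa$. For regular uncountable $\kappa$, two $\kappa$-dense linear orders with distinct distributions of initial-segment cofinalities suffice; for strong-limit $\kappa$, the analogous Scott-style back-and-forth works. Under the extra assumption $\kappa^{<\kappa}=\kappa$ and a given $\kappa^{+},\kappa$-tree $t$, a standard bookkeeping along the branches of $t$ lets me arrange that II wins $\mathrm{EF}_{t}^{\kappa}(I_{1}, I_{2})$ as linear orders. Set $\mathcal{W}^{j} := \mathcal{W}(I_{j})$ for $j=1,2$.

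Clause (1) is immediate from $|\mathcal{W}(I)| = |I| = \kappa$, and clause (2) is built into the EM construction. For (3) and the game in (5), any back-and-forth system of size $<\kappa$ between $I_{1}$ and $I_{2}$ lifts to one between $\mathcal{W}^{1}$ and $\mathcal{W}^{2}$ by closing each partial map under the Skolem terms of $\Phi$; this standard lifting yields both $\mathcal{W}^{1} \equiv_{\infty\kappa} \mathcal{W}^{2}$ and a winning strategy for II in $\mathrm{EF}_{t}^{\kappa}(\mathcal{W}^{1},\mathcal{W}^{2})$. The main obstacle is clause (4): the indiscernible skeleton is not $L^{*}$-definable, so an arbitrary ground-level isomorphism $\mathcal{W}^{1} \to \mathcal{W}^{2}$ need not map skeleton to skeleton. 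This is handled by the usual invariant from Shelah's analysis of unstable theories: the $\phi$-definable closure of the skeleton inside $(V_{0}^{\mathcal{W}^{j}}, R_{0}^{\mathcal{W}^{j}}, \ldots, R_{n}^{\mathcal{W}^{j}})$, modulo a canonical equivalence, is an isomorphism invariant that still recovers enough of $I_{j}$ to separate $I_{1}$ from $I_{2}$. Since we have full freedom in the choice of $I_{1}, I_{2}$, we arrange these invariants to be non-isomorphic from the start, which rules out any ground-level isomorphism and establishes (4).
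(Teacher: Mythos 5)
Your overall strategy --- Skolemize, extract a template $\Phi$ by Erd\H{o}s--Rado from the $\phi$-chain, and take Ehrenfeucht--Mostowski models over two $L_{\infty\kappa}$-equivalent non-isomorphic linear orders --- is the same as the paper's, and clauses (1)--(3) and (5) are handled the same way (the paper also obtains (3) and (5) by transferring equivalence of the index orders to the hulls). The gap is in clause (4): you have correctly located the difficulty but not closed it. The non-structure machinery that converts non-isomorphism of index orders into non-isomorphism of models computes invariants of a structure that is the Skolem hull of its own skeleton. Here the structure whose isomorphism type is at issue is $(V_0^{\mathcal{W}^j}, R_0, \ldots, R_n)$, and $V_0$ is \emph{not} closed under the Skolem functions of the ambient $\beta$-order-model; a priori $V_0$ need not even be generated by, or determined by, the skeleton. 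The paper's proof deals with this by closing the Skolem function set under projections and compositions and then truncating every resulting function to $V_0$ (values on tuples not in $V_0$ are redirected to a fixed constant $c \in V_0$), producing a family $F$ with $V_0^{\textrm{EM}(I,\Phi)} = \textrm{SH}^F(\{a_i \, | \, i \in I\})$ while keeping the skeleton order-indiscernible for these functions. Without some such device, your appeal to ``the $\phi$-definable closure of the skeleton, modulo a canonical equivalence'' as an isomorphism invariant of the ground level is unsupported: an isomorphism of the ground-level structures need not respect anything related to the skeletons, and the invariant you describe is not shown to be computable from $(V_0, R_0, \ldots, R_n)$ alone.

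The second problem is the assertion that you have ``full freedom in the choice of $I_1, I_2$'' and can ``arrange these invariants to be non-isomorphic from the start.'' The choice is in fact tightly constrained: the orders must simultaneously be $L_{\infty\kappa}$-equivalent (indeed $\textrm{EF}^\kappa_t$-equivalent for case (5)) and yield non-isomorphic hulls, and producing such pairs is precisely the substance of Shelah's non-structure theorems. The paper imports this from \cite{Sh2} (where trees with $\omega+1$ levels are constructed and then coded as linear orders, as in the proof of Theorem 80, Claim 4 of \cite{FrHyKu}) and, for the game version, from \cite{HytTu}. Your candidate pair for regular $\kappa$ ($\kappa$-dense orders with different distributions of initial-segment cofinalities) comes with no argument that these cofinality invariants survive passage to the EM hull and remain visible in $(V_0, R_0, \ldots, R_n)$; in general they need not, which is why the cited constructions are required. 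To repair the proof you should either reproduce those constructions or cite them explicitly, and add the relativization of the Skolem functions to $V_0$ described above.
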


\begin{proof}
We add to $L$ a new constant symbol $c$, and let $c^{\mathcal{W}}=\textrm{Pr}^{\mbox{len}(w_0)}_0(w_0)$.
After this we add Skolem functions and denote by $\mathcal{W}^\textrm{Sk}$ the structure obtained.
Let $(I,<)$ be a linear order.
It follows from Erd\"os-Rado Theorem that there is a template $\Phi$ such that if $\{a_i \, | \, i \in I\}$ is the skeleton of $\textrm{EM}^1(I, \Phi)$, then for any $i_1, \ldots, i_n \in I$, there are $j_1, \ldots, j_n \in V_0^{\mathcal{W}}$ so that the mapping $\pi: \textrm{SH}(a_{i_1}, \ldots, a_{i_n}) \to \mathcal{W}^\textrm{Sk}$, $a_{i_k} \mapsto w_{j_k}$,   for $1 \leq k \leq n$, is an elementary embedding.

Thus, $\textrm{EM}(I, \Phi)$ is a $\beta$-order-model.
(If the conditions from Definition \ref{betamod} would not hold in $\textrm{EM}(I, \Phi)$, this would be witnessed already in some finite $A \subseteq \textrm{EM}(I, \Phi)$. This, of course, is impossible as we have the elementary embeddings.)
Also, it follows from the definition of an Ehrenfeucht-Mostowski model, that if $I$ and $J$ are linear orders such that $I \equiv J$ $(L_{\infty \kappa})$, then $\textrm{EM}(I, \Phi) \equiv \textrm{EM}(J, \Phi)$ $(L_{\infty \kappa})$.

Now we close the Skolem functions with respect to projections and compositions as follows.
If $f$ is an $n$-place function, then for each $m \in \omega$ and each tuple $(i_1, \ldots, i_m)$, where $1 \leq i_k \leq m$ for $1 \leq k \leq m$, we add a new $m$-place function symbol $g$ and interpret it as follows.
If $\bar{a} \in \textrm{EM}(I, \Phi)^m$, then
\begin{displaymath}
g(\bar{a})=f(\textrm{Pr}_{i_1}^m(\bar{a}), \ldots, \textrm{Pr}_{i_n}^m(\bar{a})).
\end{displaymath}
After this we add for each $n$-place function $f$ and for each $n$-tuple of $m$-place functions $g_1, \ldots, g_n$, a new function symbol $h$ that is interpreted as follows.
If $\bar{a} \in \textrm{EM}(I, \Phi)^m$, then
\begin{displaymath}
h(\bar{a})=f(g_1(\bar{a}), \ldots, g_n(\bar{a})).
\end{displaymath}
We repeat the process $\omega$ many times and denote by $F^{*}$ the collection of functions obtained this way.

For each $f \in F^{*}$, we define a function $f^0$ so that for each $\bar{a} \in \textrm{EM}(I, \Phi)$,
\begin{displaymath}
f^0(\bar{a})=\left\{ \begin{array}{ll}
f(\bar{a}) & \textrm{if $\bar{a} \in V_0^{\textrm{EM}(I, \Phi)}$}
\\ c^{\textrm{EM}(I, \Phi)} & \textrm{otherwise}\\  
\end{array} \right.
\end{displaymath} 
Let $F=\{f^0 \, | \, f \in F^{*}\}$.
Denote by $\textrm{EM}^2(I, \Phi)$ the model we get by interpreting in $\textrm{EM}^1(I,\Phi)$ the new Skolem-functions. 
The new functions are definable in $\textrm{EM}^1(I,\Phi)$, and thus the skeleton $\{a_i \, | \, i \in I \}$ is a set of order indiscernibles in $\textrm{EM}^2 (I,\Phi)$.
Moreover, $V_0^{\textrm{EM}(I,\Phi)}=\textrm{SH}^F(\{a_i \, | \, i \in I \})$.

As in \cite{Sh2} we can construct $L_{\infty \kappa}$-equivalent linear orders $I$ and $J$ such that the models  $\textrm{EM}(I, \Phi)$ and $\textrm{EM}(J, \Phi)$ are non-isomorphic (to be exact, in \cite{Sh2}, instead of linear orders, trees with $\omega+1$ levels are constructed, but as in e.g. \cite{FrHyKu}, proof of Theorem 80, Claim 4, trees can be coded as linear orders).
 
If $\kappa^{<\kappa}=\kappa$, we choose linear orders $I$ and $J$ as in \cite{HytTu} to obtain $\textrm{EM}(I, \Phi)$ and $\textrm{EM}(J, \Phi)$ as wanted.
\end{proof}

In order to apply the lemma for the proof of Theorem~\ref{lastbutone} we now introduce an ultraproduct construction:

\begin{definition}
Let $I$ be a set and $D$ an ultrafilter over $I$.
Let $\beta$ be a countable ordinal, let $\mathcal{W}_i$, $i \in I$, be $\beta$-order-models, and let $\prod_I \mathcal{W}_i/D$ be the ultaproduct of the models $\mathcal{W}_i$ modulo $D$.
We let $\prod^B_I \mathcal{W}_i / D \subseteq \prod_I \mathcal{W}_i/D$ be such that $f/D \in \prod^B_I \mathcal{W}_i / D$  if and only if either
\begin{displaymath}
\{i \in I \, | \, f(i) \in V_{\alpha+1}^{\mathcal{W}_i} \setminus V_{\alpha}^{\mathcal{W}_i} \} \in D
\end{displaymath}
 for some ordinal $\alpha+1 < \beta$, or
 \begin{displaymath}
 \{i \in I \, | \, f(i) =\underline{\alpha}^{\mathcal{W}_i}  \} \in D
\end{displaymath}
for some ordinal $\alpha < \beta$.
We call $\prod^B_I \mathcal{W}_i / D$ \emph{the bounded ultraproduct of the models $\mathcal{W}_i$ modulo $D$}. 
If $\mathcal{W}_i=\mathcal{W}$ for every $i \in I$, then we write $\prod^B_I \mathcal{W}_i/D=\prod^B_I \mathcal{W}/D$ and call it \emph{the bounded ultrapower of the model $\mathcal{W}$ modulo $D$}.
\end{definition}

\begin{Remark}
Let $\mathcal{W}_i$, $i \in I$ be $\beta$-order-models for some countable ordinal $\beta$.
\begin{enumerate}[(i)]
\item 
$\prod^B_I \mathcal{W}_i / D \preccurlyeq^B \prod_I \mathcal{W}_i / D$.

To show this, we have to prove that whenever $\phi(x_1, \ldots, x_n)$ is a bounded formula, and $f_1/D, \ldots, f_n/D \in \prod^B_I \mathcal{W}_i / D$, then $\prod^B_I \mathcal{W} / D \models \phi(f_1/D, \ldots, f_n/D)$ if and only if $\prod_I \mathcal{W} / D \models \phi(f_1/D, \ldots, f_n/D)$. 
The claim clearly holds if $\phi$ is an atomic formula or a Boolean combination of atomic formulas.
Suppose now the claim holds for $\psi(x_1, \ldots, x_n)$ and $\phi(x_1, \ldots, x_n)=(\exists y \in V_{\alpha+1}\setminus V_\alpha) \psi(y, x_1, \ldots, x_n)$ for some ordinal $\alpha+1 < \beta$.
The direction from left to right is clear.
For right to left, suppose $\prod_I \mathcal{W}_i / D \models \phi(f_1/D, \ldots, f_n/D)$.
Thus, there is some $g/D \in V_{\alpha+1}^{\prod_I \mathcal{W}_i / D} \setminus V_\alpha^{\prod_I \mathcal{W}_i / D}$ such that $\prod_I \mathcal{W}_i / D \models \psi(g/D, f_1/D, \ldots, f_n/D)$.
By \L o\'s's Theorem, $\{i \in I \, | \, g(i) \in V_{\alpha+1}^{\mathcal{W}_i} \setminus V_\alpha^{\mathcal{W}_i} \} \in D$.
Thus, $g/D \in \prod^B_I \mathcal{W}_i / D$, and the claim follows.
\item By (i), \L o\'s's Theorem holds for all bounded sentences in the case of bounded ultraproducts, and it follows that for any $\beta$-order-model $\mathcal{W}$,  $\mathcal{W} \equiv^B\prod^B_I \mathcal{W} / D$. 
\item If $\beta=n<\omega$, then $\prod^B_I \mathcal{W}_i / D = \prod_I \mathcal{W}_i/D$.
\item $\prod^B_I \mathcal{W}_i / D$ is a $\beta$-order-model.
\end{enumerate}
\end{Remark}

Now we are ready to prove Theorem \ref{lastbutone}:

\begin{proof}
Let $\lambda \geq \beth_{(2^\omega)^{+}}$ and let $D$ be a $\lambda$-regular ultrafilter over $\lambda$.
Let $\mathcal{W}^{*}=\prod^B_\lambda \mathcal{W} / D$. 
It clearly holds that 
\begin{displaymath}
(V_0^{\mathcal{W}^{*}}, R_0, \ldots, R_n) \cong \prod_\lambda (V_0^{\mathcal{W}}, R_0, \ldots, R_n)/D.
\end{displaymath}
By \cite{CK}, Theorem 4.3.12, $\prod_\lambda (V_0^{\mathcal{W}}, R_0, \ldots, R_n)/D$ is $\lambda^{+}$-universal.
Thus, as we assumed that $\textrm{Th}(V_0^\mathcal{W}, R_0, \ldots, R_n)$ is unstable, there are elements $w_i \in V_0^{\mathcal{W}^{*}}$, $i \leq \beth_{(2^\omega)^{+}}$, such that $w_i \neq w_j$ for $i \neq j$, and a first-order formula $\phi$ such that 
\begin{displaymath}
(V_0^{\mathcal{W}^{*}}, R_0, \ldots, R_n) \models \phi(w_i, w_j) \iff i<j.
\end{displaymath}
Hence we can apply Lemma \ref{lemma2} to the $\beta$-order-model $\mathcal{W}^{*}$. 

If $\beta$ is finite, then $\mathcal{W}^{*}=\prod_\lambda \mathcal{W} / D$, and $\mathcal{W} \equiv_{\omega \omega} \mathcal{W}^{*} \equiv_{\omega \omega} \mathcal{W}^1$.
\end{proof}

The models $\mathcal{W}^1$ and $\mathcal{W}^2$ of the above theorem have a different higher order part, i.e. predicates $P_\gamma$. Thus they represent different versions of higher order logic. Can we get the same result with more or less the {\em same} higher order part. This would be in line with the situation with the full models, where the higher order part is fixed to be built from the real power-sets. We prove a result in this direction but to obtain the desired identity of the higher order components we have to make cardinality arithmetic assumptions.

We define a version of the concept of a $\beta$-order-model appropriate for the situation: 

\begin{definition}
Let $L'=\{R_0, \ldots, R_n \}$ be a relational vocabulary, and let $\ma$ and $\mb$
be $L'$-models such that $\textrm{dom}(\ma) \cap \textrm{dom}(\mb) = \emptyset$.
Let  $L^{*}=\{R_0, \ldots, R_n, R_0', \ldots, R_n', P_{\ma}, P_{\mb} \}$ and let $\mathcal{C}$ be an $L^{*}$-model such that
\begin{itemize}
\item $\textrm{dom}(\mathcal{C})=\textrm{dom}(\ma) \cup \textrm{dom}(\mb)$, 
\item ${P_\ma}^{\mathcal{C}}=\textrm{dom}(\ma)$, ${P_{\mb}}^{\mathcal{C}}=\textrm{dom}(\mb)$, 
\item  $R_i^{\mathcal{C}}=R_i^{\ma}$, $R_i'^{\mathcal{C}}=R_i^{\mb}$, for $0 \leq i \leq n$.
\end{itemize} 
If  $\mathcal{W}$ is a $\beta$-order-model in the vocabulary $L=L^{*} \cup \{P_\beta, <, V, \in\}\cup\{\underline{\alpha} \, | \, \alpha<\beta\}$, and  $(V_0^\mathcal{W}, R_0, \ldots, R_n, R_1', \ldots, R_n', P_\ma, P_\mb) = \mathcal{C}$, we say $\mathcal{W}$ is a \emph{$\beta$-order-model over the models $\ma$ and $\mb$}.
We call $S_\beta(\mathcal{C})$ \emph{the full $\beta$-order-model over $\ma$ and $\mb$} and denote it by $S_\beta(\ma, \mb)$. 

If $\phi$ is an $L$-formula not containing the symbols $R_0', \ldots, R_n'$ and $P_\mb$, we denote by $\phi^{*}$ the formula obtained from $\phi$ by replacing the symbol $R_i$ by the symbol $R_i'$ for $0 \leq i \leq n$, and the symbol $P_\ma$ by the symbol $P_\mb$.
 \end{definition}

Note that in a $\beta$-order-model over the models $\ma$ and $\mb$ the domains of both $\ma$ and $\mb$ constitute the ground level $V_0$. So whatever the formal sets there are on the higher levels $P_i$ their common properties are properties of subsets of the domain of $\ma$ as much as of $\mb$. In this way the below theorem is closer to the results of Section~\ref{fot} than Theorem~\ref{lastbutone} is. 

\begin{teoreema}
Suppose $T$ is a countable unstable theory in the language $L'=\{R_0, \ldots, R_n\}$, and $\kappa$ and $\lambda$ are cardinals such that $\kappa=\lambda^{+}=2^\lambda$, $\lambda^{<\lambda}=\lambda > \omega$.
Then there are non-isomorphic models $\ma, \mb \models T$ and an $\omega$-order-model $\mathcal{W} \models \textrm{CA}$ over $\ma$ and $\mb$ such that for each $L_{\kappa \omega}$-sentence $\phi$ not containing the symbols $R_0', \ldots, R_n'$ and $P_\mb$, it holds that
 \begin{displaymath}
 \mathcal{W} \models \phi \iff \mathcal{W} \models \phi^{*}.
\end{displaymath} 
\end{teoreema}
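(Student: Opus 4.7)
The plan is to build $\mathcal{W}$ as an Ehrenfeucht--Mostowski model over two linear orders whose EM-projections to $L'$ are non-isomorphic but which are $L_{\infty\kappa}$-equivalent via a long transfinite Ehrenfeucht--Fra\"iss\'e game. The cardinal arithmetic assumption $\kappa = \lambda^+ = 2^\lambda$, $\lambda^{<\lambda} = \lambda > \omega$ is precisely what is needed in \cite{Sh2} and \cite{HytTu} to obtain such pairs of orders together with a winning strategy for the equivalence player.

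First, to handle the higher-order part, I would fix a countable $\mathcal{C} \models T$, form the full $\omega$-order-model $S_\omega(\mathcal{C})$, and pass to the bounded ultrapower $\mathcal{W}^{*} = \prod^B_\lambda S_\omega(\mathcal{C})/D$ for a $\lambda$-regular ultrafilter $D$. By Remark \ref{standardbm}(iv) together with the preservation of bounded sentences, $\mathcal{W}^{*} \models \textrm{CA}$, and its ground level is $\lambda^{+}$-universal by Theorem 4.3.12 of \cite{CK}. Since $T$ is unstable, this ground level contains a linearly-ordered sequence $(w_i)_{i<\beth_{(2^\omega)^{+}}}$ long enough to invoke the Erd\"os--Rado step of Lemma \ref{lemma2}, yielding a Skolemized template $\Psi$ in an expanded vocabulary such that $\textrm{EM}(K,\Psi)$ is an $\omega$-order-model satisfying $\textrm{CA}$ for every linear order $K$.

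Second, by the methods of \cite{HytTu}, under our cardinal assumptions I would pick linear orders $I, J$ of size $\kappa$ for which the equivalence player wins $\textrm{EF}^\kappa_t(I,J)$ for a suitable $\kappa^{+},\kappa$-tree $t$, while $\textrm{EM}(I,\Psi) \upharpoonright L' \not\cong \textrm{EM}(J,\Psi) \upharpoonright L'$ (the non-isomorphism being arranged via the instability formula as in the proof of Theorem 80, Claim 4 of \cite{FrHyKu}). I would then introduce a duplicate vocabulary $L'' = \{R_0',\ldots,R_n'\}$ and a symmetric template $\Psi^{*}$ over the sum $I+J$ that interprets $R_i, P_\ma$ via $\Psi$ on the $I$-segment and $R_i', P_\mb$ via $\Psi$ on the $J$-segment, together with extra Skolem functions handling cross-parameter definability. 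Setting $\mathcal{W} = \textrm{EM}(I+J,\Psi^{*})$, $\ma = \textrm{EM}(I,\Psi) \upharpoonright L'$, $\mb = \textrm{EM}(J,\Psi) \upharpoonright L'$, one obtains non-isomorphic $\ma, \mb \models T$ and an $\omega$-order-model $\mathcal{W} \models \textrm{CA}$ over them.

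Any $L_{\kappa\omega}$-sentence $\phi$ in the $\ma$-vocabulary speaks only about the $P_\ma$-hereditary part of $\mathcal{W}$, which is generated from the $I$-indiscernibles, and $\phi^{*}$ symmetrically about the $J$-generated $P_\mb$-hereditary part. The winning strategy of the equivalence player in $\textrm{EF}^\kappa_t(I,J)$ should lift through $\Psi^{*}$ to show that these two parts are $L_{\kappa\omega}$-equivalent, yielding $\mathcal{W} \models \phi \iff \mathcal{W} \models \phi^{*}$. The hard part will be the template $\Psi^{*}$: the cross-Skolem functions must be rich enough for $\mathcal{W}$ to satisfy $\textrm{CA}$ --- since a bounded-definable set may use parameters from both $\ma$ and $\mb$ --- while keeping the $P_\ma$- and $P_\mb$-hereditary parts compartmentalised enough for the EF strategy on $I,J$ to transfer to an $L_{\kappa\omega}$-equivalence between them.
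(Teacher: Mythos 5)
Your construction diverges from the paper's at the decisive point, and the step you yourself flag as ``the hard part'' is in fact a genuine gap rather than a technicality. The premise that an $L_{\kappa\omega}$-sentence $\phi$ omitting $R_0',\ldots,R_n',P_\mb$ ``speaks only about the $P_\ma$-hereditary part of $\mathcal{W}$'' is false: the quantifiers of $\phi$ range over all of $\mathcal{W}$, so $\phi$ sees the elements of $\mathrm{dom}(\mb)$ (as unstructured points outside $P_\ma$) and, worse, all higher-order elements of $\mathcal{W}$, including those coding subsets and relations that mix both sides. The Comprehension Axioms actively work against the compartmentalisation you need: CA forces every bounded-definable set with parameters from \emph{both} the $I$-generated and the $J$-generated parts to be represented at the next level, so the higher-order levels of $\mathrm{EM}(I+J,\Psi^{*})$ are saturated with mixed Skolem terms $t(a_{\bar\imath},a_{\bar\jmath})$. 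What the theorem requires is a global back-and-forth between $\mathcal{W}$ read in the unprimed vocabulary and $\mathcal{W}$ read in the primed one, acting on these mixed higher-order elements as well; a winning strategy for the equivalence player in $\mathrm{EF}^\kappa_t(I,J)$ gives partial maps between $I$ and $J$ (hence between $\ma$ and $\mb$), but you give no mechanism for extending such partial maps to the full tower over $\mathrm{dom}(\ma)\cup\mathrm{dom}(\mb)$ while respecting $\in$, and it is not clear one exists for an EM-generated tower. Since $\phi$ can be chosen to exploit exactly the Henkin second-order apparatus of $\mathcal{W}$ to try to characterize $\ma$ up to isomorphism, this is the whole content of the theorem, not a loose end.

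For comparison, the paper avoids the problem entirely: it takes the Mostowski collapse $M$ of a $\prec H(\chi)$ submodel of size $\kappa$ closed under $<\kappa$-sequences (and containing all $L_{\kappa\omega}$-formulas), forces over $M$ with $\{f:\alpha\to\{0,1\}\mid\alpha<\kappa\}$, builds $\ma_G$ and $\mb_G$ from the generic stationary set $S_G$ and its complement via the machinery of Lemma~\ref{lemma1}, and sets $\mathcal{W}=S_\omega^{M[G]}(\ma_G,\mb_G)$ --- full in $M[G]$, hence a CA-model. The symmetry $\mathcal{W}\models\phi\iff\mathcal{W}\models\phi^{*}$ then comes from the flip automorphism of the forcing above $\mathrm{dom}(p)$: it produces $G^{*}$ with $M[G^{*}]=M[G]$, $\ma_{G}\cong\mb_{G^{*}}$ and $\mb_G\cong\ma_{G^{*}}$, and the induced swap map $F$ is defined on the \emph{entire} full tower by $\in$-recursion, which is exactly the global symmetry your EM approach lacks. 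If you want to salvage an EM-based proof you would need to prove a lifting lemma for the EF strategy to the whole $\omega$-order tower; as written, the argument does not establish the theorem.
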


\begin{proof}
Let $M^{*} \preccurlyeq H(\chi)$ for some cardinal $\chi$ that is large enough, and let $|M^{*}|=\kappa$, ${M^{*}}^{<\kappa} \subseteq M^{*}$, and suppose $M^{*}$ is such that it contains everything needed later in the proof, especially every $L_{\kappa \omega}$ -formula.
Denote by $M$ the Mostowski collapse of $M^{*}$. 
As in Theorem \ref{pakotus}, let  $\mathbb{P}=\{f: \alpha \rightarrow \{0,1\} \, | \, \alpha < \kappa\}$, ordered by inclusion.
Let $G$ be a $\mathbb{P}$-generic filter over $M$, and let $S_G=(\bigcup G)^{-1}(1) \cap S^\kappa_\lambda$.
Using similar notation as in Lemma \ref{lemma1}, let $\ma_G=\textrm{EM}(L(T(S_G)), \Phi_T)$, and $\mb_G=\textrm{EM}(L(T(S^\kappa_\lambda \setminus S_G)), \Phi_T)$.
These models can be formed either in the extension $M[G]$ or in $V$, and the result is the same.
We may suppose without loss of generality, that $\ma_G \cap \mb_G=\emptyset$.

Let $\mathcal{W}_G=S_\omega^{M[G]}(\ma, \mb)$. 
Suppose $\phi$ is an $L_{\kappa \omega}$-sentence such that $\mathcal{W}_G \models \phi$. 
It suffices to show that $\mathcal{W}_G \models \phi^{*}$ in $M[G]$.
Clearly $\mathcal{W}_G \models \phi$ in $M[G]$, and thus there is some $p \in G$ such that in $M$
 \begin{displaymath}
 p \Vdash \dot{\mathcal{W}}_G \models \check{\phi}.
 \end{displaymath}
Let $\textrm{dom }p=\gamma$, and denote
\begin{displaymath}
G^*_\gamma=\{f^*_\gamma \, | \, f \in G \},
\end{displaymath}
where 
\begin{displaymath}
f^*_\gamma(\alpha)=\left\{ \begin{array}{ll}
f(\alpha) & \textrm{if $\alpha < \gamma$}\\ 1-f(\alpha) & \textrm{otherwise.}\\  
\end{array} \right.
\end{displaymath}
Then $\mathcal{W}_{G^{*}} \models \phi$ in $M[G^{*}]=M[G]$. 
Also, as $\ma_{G} \bigtriangleup \mb_{G^{*}}$ and $\mb_{G} \bigtriangleup \ma_{G^{*}}$ are non-stationary, there are isomorphisms $f: \ma_{G} \to \mb_{G^{*}}$ and $g: \mb_{G} \to \ma_{G^{*}}$ in $M[G]$.
There is in $M[G]$ (and hence in $V$) a function $F: \mathcal{W}_G \to  \mathcal{W}_{G^{*}}$ such that $F \upharpoonright \ma_G = f$, $F \upharpoonright \mb_G = g$, and $F(x)=\{F(y) \, | \, y \in^{\mathcal{W}_G} x \}$ if $x \in \mathcal{W}_G \setminus (\ma_G \cup \mb_G)$.
Since $\mathcal{W}_{G^{*}}=S_\omega^{M[G]}(\ma_{G^{*}}, \mb_{G^{*}})$, $F$ is an "isomorphism" in the sense that if $\psi(\bar{x})$ is an $L^{*}$-formula not containing the symbols $R_0', \ldots, R_n'$, $P_\mb$, and $\bar{a} \in \ma_G$, $\bar{b} \in \mb_G$, then
\begin{displaymath}
\ma_G \models \psi(\bar{a}) \iff \mb_{G^{*}} \models \psi^{*}(F(\bar{a})),
\end{displaymath}
and
\begin{displaymath}
\mb_G \models \psi^{*}(\bar{b}) \iff \ma_{G^{*}} \models \psi(F(\bar{b})).
\end{displaymath}
Thus, $\mathcal{W}_G \models \phi^{*}$ in $M[G]$.
The other direction is symmetric.
\end{proof}

\bibliographystyle{plain}

\bibliography{HytKanVaa}
\medskip

Department of Mathematics

University of Helsinki
\medskip

Institute of Logic, Language and Computation

University of Amsterdam

\end{document}